\newtheorem{thm}{Theorem}[section]
\newtheorem{lemma}[thm]{Lemma}
\newtheorem{prop}[thm]{Proposition}
\newtheorem{cor}[thm]{Corollary}
\theoremstyle{definition}
\newtheorem{df}[thm]{Definition}
\newtheorem{ex}[thm]{Example}
\theoremstyle{remark}
\newtheorem{rem}[thm]{Remark}
\newcommand{\va}{{\bf a}}
\newcommand{\vzero}{{\bf 0}}
\newcommand{\Z}{\mathbb Z}
\newcommand{\Q}{\mathbb Q}
\newcommand{\Af}{\mathbb{A}_{\rm f}}
\newcommand{\hZ}{\widehat{\Z}}
\newcommand{\Aut}{{\rm Aut}}
\newcommand{\End}{{\rm End}}
\newcommand{\diag}{{\rm diag}}
\newcommand{\bs}{\backslash}
\newcommand{\Hei}{\mathcal{H}}
\newcommand{\gHei}{\mathfrak{H}}
\newcommand{\Fr}{\mathcal{F}}
\newcommand{\gFr}{\mathfrak{F}}
\newcommand{\hl}{\widehat{L}}
\newcommand{\lp}{L_p}
\newcommand{\halpha}{\widehat{\alpha}}
\newcommand{\trpi}{T_{r,p}^{(i)}}
\newcommand{\gl}{G_L}
\newcommand{\dl}{\Delta_L}
\newcommand{\gaml}{\Gamma_{\! L}}
\newcommand{\glp}{G_{L_p}}
\newcommand{\dlp}{\Delta_{L_p}}
\newcommand{\gamlp}{\Gamma_{\! L_p}}
\newcommand{\hgl}{\widehat{G}_L}
\newcommand{\hdl}{\widehat{\Delta}_L}
\newcommand{\hgaml}{\widehat{\Gamma}_{\! L}}
\newcommand{\tl}{T_{L}}
\newcommand{\tlp}{T_{L_p}}
\newcommand{\htl}{\widehat{T}_L}
\newcommand{\tln}{\tl(n)}
\newcommand{\htln}{\htl(n)}
\newcommand{\tlpk}{\tlp(p^k)}
\newcommand{\dls}{D_L(s)}
\newcommand{\plp}{P_{L_p}}
\newcommand{\etal}{\eta_L}
\newcommand{\etadl}{\eta_{*L}}
\newcommand{\etaul}{\eta_L^*}
\newcommand{\rl}{R_L}
\newcommand{\rlp}{R_{L_p}}
\newcommand{\rlq}{R_{L_q}}
\newcommand{\hrl}{\widehat{R}_L}
\newcommand{\gHp}{G_{\mathcal{H}_p}}
\newcommand{\dHp}{\Delta_{\mathcal{H}_p}}
\newcommand{\gamHp}{\Gamma_{\! \mathcal{H}_p}}
\newcommand{\hdH}{\widehat{\Delta}_\mathcal{H}}
\newcommand{\tHp}{T_{\mathcal{H}_p}}
\newcommand{\tHq}{T_{\mathcal{H}_q}}
\newcommand{\dHs}{D_\mathcal{H}(s)}
\newcommand{\hdHs}{\widehat{D}_\mathcal{H}(s)}
\newcommand{\pHp}{P_{\mathcal{H}_p}}
\newcommand{\etadH}{\eta_{*\mathcal{H}}}
\newcommand{\etauH}{\eta_\mathcal{H}^*}
\newcommand{\rH}{R_\mathcal{H}}
\newcommand{\rHp}{R_{\mathcal{H}_p}}
\newcommand{\hrH}{\widehat{R}_\mathcal{H}}
\newcommand{\gZr}{G_{\Z^r}}
\newcommand{\dZr}{\Delta_{\Z^r}}
\newcommand{\gamZr}{\Gamma_{\! \Z^r}}
\newcommand{\gZrp}{G_{{\Z^r_p}}}
\newcommand{\dZrp}{\Delta_{\Z^r_p}}
\newcommand{\gamZrp}{\Gamma_{\! \Z^r_p}}
\newcommand{\hdZr}{\widehat{\Delta}_{\Z^r}}
\newcommand{\hgamZr}{\widehat{\Gamma}_{\! \Z^r}}
\newcommand{\dZrs}{D_{\Z^r}(s)}
\newcommand{\hdZrs}{\widehat{D}_{\Z^r}(s)}
\newcommand{\pZrp}{P_{\Z^r_p}}
\newcommand{\etaZr}{\eta_{\Z^r}}
\newcommand{\etadZr}{\eta_{*{\Z^r}}}
\newcommand{\etauZr}{\eta_{\Z^r}^*}
\newcommand{\gZtp}{ G_{ {\Z^2_p} } }
\newcommand{\dZtp}{\Delta_{\Z^2_p}}
\newcommand{\gamZtp}{\Gamma_{\! \Z^2_p}}
\newcommand{\rZr}{R_{\Z^r}}
\newcommand{\rZrp}{R_{\Z^r_p}}
\newcommand{\hrZr}{\widehat{R}_{\Z^r}}
\newcommand{\rZtp}{R_{\Z^2_p}}
\newcommand{\hrZt}{\widehat{R}_{\Z^2}}
\newcommand{\hdZt}{\widehat{D}_{\Z^2}}
\newcommand{\pZtp}{P_{\Z^2_p}}
\newcommand{\tZtp}{T_{\Z^2_p}}
\newcommand{\htZt}{\widehat{T}_{\Z^2}}
\newcommand{\hs}{\widehat{s}}
\newcommand{\hphi}{\widehat{\phi}}
\newcommand{\hpsi}{\widehat{\psi}}
\newcommand{\htheta}{\widehat{\theta}}
\newcommand{\htH}{\widehat{T}_{\Hei}}
\newcommand{\hbtheta}{\widehat{\boldsymbol{\theta}}}
\newcommand{\hdzt}{\widehat{\Delta}_{\Z^2}}
\newcommand{\hdls}{\widehat{D}_L(s)}
\newcommand{\mo}{\Delta}
\newcommand{\gr}{\Gamma}
\newcommand{\degr}[1]{\deg \! {#1}}
\newcommand{\zetai}[1]{\zeta^i_{#1}}
\newcommand{\zetah}[1]{\zeta_{#1}^{\wedge}}
\newcommand{\ani}[1]{a_n^i(#1)}
\newcommand{\anh}[1]{a_n^{\wedge}(#1)}
\newcommand{\zetaiLs}{\zetai{L}(s)}
\newcommand{\zetahLs}{\zetah{L}(s)}
\newcommand{\aniL}{\ani{L}}
\newcommand{\anhL}{\anh{L}}
\newcommand{\zetaiHs}{\zetai{\Hei}(s)}
\newcommand{\zetahHs}{\zetah{\Hei}(s)}
\newcommand{\Si}{\mathcal{S}^i}
\newcommand{\Sh}{\mathcal{S}^{\wedge}}
\newcommand{\tSh}{\mathcal{S}'^{\wedge}}
\begin{document}
\title{Global properties of a Hecke ring associated with the Heisenberg Lie algebra}


\author{Fumitake Hyodo}



\date{}

\maketitle

\begin{abstract}
This study concerns (not necessarily commutative) Hecke rings associated with certain algebras
and describes a formal Dirichlet series with coefficients in the Hecke rings, which can be used to generalize Shimura's series. Considering the case of the Heisenberg Lie algebra, an analog of the identity for Shimura's series derived employing the rationality theorem, presented by Hecke and Tamagawa, is established. Moreover, this analog recovers the explicit formula for the pro-isomorphic zeta function of the Heisenberg Lie algebra shown by Grunewald, Segal and Smith.
\end{abstract}

\renewcommand{\thefootnote}{\fnsymbol{footnote}}
\footnote[0]{
2020 Mathematics Subject Classification. Primary 20C08; Secondary 20G25, 20G30, 11F03, 11M41, 20E07.
}

\section{Introduction}\label{section:intro}
This study concerns Hecke rings introduced by Shimura \cite{S1}.
A classical study of Hecke rings is the work by Hecke \cite{H} and Tamagawa \cite{T} on the Hecke rings associated with the general linear groups. They showed that these Hecke rings are commutative polynomial rings. Furthermore, they defined formal power series with coefficients in these Hecke rings, and showed their rationality.
The results of this work are summarized in \cite[Chapter 3]{S2}, where formal Dirichlet series with coefficients in these Hecke rings were further introduced.
Andrianov \cite{A1}, Hina--Sugano \cite{Hi}, Satake \cite{Sa}, and Shimura \cite{S3} studied Hecke rings associated with classical groups, wherein they further developed the work of Hecke \cite{H} and Tamagawa \cite{T}.
In addition, other studies were conducted on the Hecke rings associated with Jacobi and Chevalley groups by Dulinsky \cite{D} and Iwahori-Matsumoto \cite{IM}, respectively. 

As mentioned above, various studies have been carried out on Hecke rings. However, the class of Hecke rings defined by Shimura is vast, and only a small part of it has been studied to date. 

From now on, an algebra implies an abelian group with a bi-additive product (e.g., an associative algebra, a Lie algebra).
Let $L$ be an algebra that is free of finite rank as an abelian group. Our previous work \cite{Hy2} introduced the Hecke rings $\rl$ and $\hrl$ associated with $L$. For the definition, see Section \ref{section:heckerings}. 
In this study, we deal with the formal Dirichlet series $\dls$ and $\hdls$ with coefficients in $\rl$ and $\hrl$, respectively, which are defined in Section \ref{section:seriesalg}.

The first result of this study is to show that the Euler product formula for $\hdls$ holds, and to give a sufficient condition for $\dls$ to have the Euler product expansion (cf. Theorems \ref{thm:eulerprodhdls} and \ref{thm:eulerproddls}).

If $L = \Z^r$ is the free abelian Lie algebra of rank $r$, the Hecke ring $\rZr$ and $\hrZr$ coincide with those treated by Hecke \cite{H} and Tamagawa \cite{T}. Further, the formal Dirichlet series $\dZrs$ and $\hdZrs$ equal those treated in \cite[Chapter 3]{S2}. Thus, it can be said that our study generalizes their study. We discuss them in Section \ref{section:Zr}.

Denote by $\Hei$ the Heisenberg Lie algebra, that is, the free nilpotent Lie algebra of class $2$ on two generators. The second result of this study is the establishment of identities for $\dHs$ and $\hdHs$, which is the primary result of this study.  Let $\hbtheta = (\htheta_p)_p$ be a family of indeterminates indexed by all prime numbers $p$.
The key idea for stating our main theorem involves regarding $\hrH$ as a module over the polynomial ring $\hrZt[\hbtheta]$. The main theorem is as follows:
\begin{thm}[Theorem \ref{thm:globalequality}]\label{thm:global:Heisenbergex:intro}
There exists a formal Dirichlet series $\widehat{I}_2 (\hbtheta;s)$ with coefficients in $\hrZt[\hbtheta]$ satisfying the following identity:
\[\widehat{I}_2 (\hbtheta;s) \cdot \dHs = \widehat{I}_2 (\hbtheta;s) \cdot \hdHs = 1.\]
\end{thm}
\smallskip

It is worth noting that this theorem is similar to Shimura's Theorem \ref{thm:globalratinality} for the case $r=2$.
At the conclusion of Section \ref{subsection:globalHei}, we establish that Theorem \ref{thm:global:Heisenbergex:intro} recovers Shimura's Theorem for $r=2$ via the endomorphism $\widehat{\phi}$ introduced in Definition \ref{df:extenstionmorphisms}.

The proof is essentially done by using some results of our previous study \cite{Hy} which is described in Section \ref{subsection:localHei}. There is no great difficulty in proving the claims stated in this study. Rather, it is important to note a natural generalization of series of \cite{H, S2, T}, and a concise identity given for a formal Dirichlet series whose coefficients are not always commutative (cf. Remark \ref{rem:noncommutativity}).

In \cite{Hy, Hy2} and this study, the case of the Heisenberg Lie algebra is considered as a first step. 
The author expects that many new Hecke rings will appear in the class of the Hecke rings of this study.
Further study of these Hecke rings is now in progress by the author. 
In \cite{Hy3}, the author investigated the Euler factor of $\hdls$ at each prime number in the case where $L$ is a higher Heisenberg Lie algebra.

Tamagawa \cite{T}, by using his Hecke theory, further investigated certain zeta functions, and proved that each of them is an entire function and has a functional equation. However, even in the Hecke rings associated with the Heisenberg Lie algebra, no analogue has been found. Further research is needed to find such applications to number theory.

It should be mentioned that our series $\dls$  and $\hdls$ are related to the zeta functions of groups and rings introduced by Grunewald, Segal and Smith \cite{GSS}. 
Let $G$ be a torsion-free finitely generated nilpotent group, and $\widehat{G}$ its profinite completion.
Denote by $\Si_n(G) \ (\text{resp.}\ \Sh_n(G))$ the family of subgroups $H$ of $G$ of index $n$ such that there is an isomorphism $H \cong G$ of groups $(\text{resp.}\ \widehat{H} \cong \widehat{G} \ \text{of topological groups})$.
The zeta functions $\zeta^i_G(s)$ and $\zeta^{\wedge}_G(s)$ of $G$ were defined in \cite{GSS} as follows:
\[\zeta^i_G(s) = \sum_{n > 0} \#\Si_n(G) n^{-s}, \quad \zeta^{\wedge}_G(s) = \sum_{n > 0} \# \Sh_n(G) n^{-s},\]
where $s$ is a complex variable. 

As an analogue of them, one can define the zeta functions of $L$.
Let $\hZ$ be the profinite completion of $\Z$, and set $\hl = L\otimes\hZ$. 
Denote by $\Si_n(L) \ (\text{resp.}\ \Sh_n(L))$ the family of subalgebras $M$ of $L$ of index $n$ such that there is an isomorphism $M \cong L$ as algebras $(\text{resp.}\ M\otimes \hZ \cong \hl \ \text{as algebras over $\hZ$})$.
We set 
$\aniL = \# \Si_n(L)$ and $\anhL = \# \Sh_n(L)$ for each $n$.
The zeta functions $\zetaiLs$ and $\zetahLs$ of $L$ are defined as follows:
\[\zetaiLs = \sum_{n > 0} \aniL n^{-s}, \quad \zetahLs = \sum_{n > 0} \anhL n^{-s}.\]
The zeta function $\zetahLs$ was also introduced in $\cite{GSS}$, 
and is called \textit{pro-isomorphic zeta function} $\zetah{L}(s)$ of $L$ in \cite{BGS} and \cite{BKO}. Although there are few papers on $\zetaiLs$, it is a natural analogue of $\zeta^i_G(s)$. We call $\zetaiLs$ the \textit{isomorphic zeta function} of $L$.

As we mention in Section \ref{section:zetafunc}, 
$\zetaiLs$ and $\zetahLs$ equal the coefficient-wise images of $\dls$ and $\hdls$ 
under the degree maps on $\rl$ and $\hrl$, respectively. For the definition of the degree map on a Hecke ring, see Section \ref{section:heckerings}.
Moreover, at the end of Section \ref{section:zetafunc}, we prove that
our Theorem \ref{thm:global:Heisenbergex:intro} derives the explicit formulae for $\zetai{\Hei}(s)$ and $\zetah{\Hei}(s)$ via the degree map on $\hrH$ as follows:
\[\zetaiHs = \zetahHs = \zeta(2s-2)\zeta(2s-3),\]
where $\zeta(s)$ is the Riemann zeta function. 

This identity is essentially due to Grunewald, Segal and Smith \cite[Theorem 7.6]{GSS}.
Precisely, for the free nilpotent group $\gFr = \gFr_{c,g}$ of class $c$ on $g$-generators, 
the identity $\zetai{\gFr}(s) = \zetah{\gFr}(s)$ and the explicit formulae for them were obtained. 
For the free nilpotent Lie algebra $\Fr = \Fr_{c,g}$ of class $c$ on $g$-generators,
by an argument essentially equivalent to that of \cite{GSS}, Berman, Glazer, and Schein proved in \cite[Theorem 5.1]{BGS} that
$\zetah{\Fr}(s)$ equals $\zetai{\gFr}(s)$ and $\zeta^{\wedge}_{\gFr}(s)$
(cf. Theorem \ref{thm:zeta:free:GSS-BGS}).
In Proposition \ref{prop:freenilp}, the equality $\zeta^{\wedge}_{\Fr}(s)  = \zeta^{i}_{\Fr}(s)$ is verified in a similar way as in the proof of \cite[Theorem 7.6]{GSS}. 
As a result, the equality $\zetai{\Fr}(s)  = \zetah{\Fr}(s) = \zetah{\gFr}(s) = \zetai{\gFr}(s)$ holds.

Write $\gHei$ for the Heisenberg group $\gFr_{2,2}$,  and focus on the identity $\zetaiHs = \zetahHs = \zeta^{\wedge}_{ \gHei }(s) = \zeta^i_{ \gHei }(s)$.
Another generalization of the identity $\zetahHs = \zeta^{\wedge}_{ \gHei }(s)$ is known.
Suppose that the nilpotent class of $G$ is $2$. Define the Lie algebra $L(G)$ as $(G/Z) \oplus Z$ 
with the usual Lie bracket operation induced by the commutator in $G$,
where $Z$ is the center of $G$. Then, we have $L(\gHei) = \Hei$, and the identity $\zeta^{\wedge}_{G}(s) = \zeta^{\wedge}_{L(G)}(s)$ is known to hold (cf. \cite[Section 1.1]{BGS} or \cite[Section 1.2.2]{Sau}). 
On the other hand, $\zeta^{i}_{G}(s) =\zeta_{G}^{\wedge}(s)$ does not hold in general. Indeed, Theorems 7.1 and 7.3 of \cite{GSS} provide a counter-example. 
The equality $\zeta^i_{G}(s) = \zeta^i_{L(G)}(s)$ is proved in Corollary \ref{cor:isomZeta}, and thus $\zetah{L(G)}(s) = \zeta^i_{L(G)}(s)$ does not always hold.

For pro-isomorphic zeta functions of Lie algebras, Berman, Glazer, and Schein \cite{BGS} further investigated.
The explicit formula for $\zetahLs$ was shown in \cite[Section 5]{BGS}, specifically for $L$ belonging to a certain class of Lie algebras over the integer rings of number fields. 
So far, we have not found any formulae for $\dls$ and $\hdls$ that recover their formulae except for this study.

The contents of this paper are organized as follows.
In Section \ref{section:heckerings}, we review the Hecke rings $\rl$ and $\hrl$. In Section \ref{section:seriesalg}, the formal series $\dls$ and $\hdls$ are introduced.
In Section \ref{section:Zr}, the case of $L = \Z^r$ is considered.
In Section \ref{section:Hei}, we study the series $\dHs$ and $\hdHs$, and prove our main theorem.
In Section \ref{section:zetafunc}, our series $\dls$ and $\hdls$ are related to the isomorphic zeta function $\zetaiLs$  and the pro-isomorphic zeta function $\zetahLs$ of $L$, respectively. Subsequently, we prove that
our main theorem also recovers the explicit formulae for $\zetaiHs$ and $\zetahHs$.
Finally, in Section \ref{section:isomZeta}, we observe the isomorphic zeta functions in the cases of the free nilpotent Lie algebras and class-2 nilpotent Lie algebras.


\section{Hecke rings associated with algebras}\label{section:heckerings}

First, we briefly recall the definition of Hecke rings and their degree maps.
For more details, refer to \cite[Chapter 3]{S2}.
Let $G$ be a group, $\mo$ be a submonoid of $G$, and $\gr$ be a subgroup of $\mo$. We assume that 
the pair $(\gr, \mo)$ is a double finite pair; that is, for all $A \in \mo$, 
$\gr \bs \gr A \gr$ and $\gr A \gr / \gr$ are finite sets.
Then, one can define the Hecke ring $R = R(\gr, \mo)$ associated with the pair $(\gr, \mo)$ as follows:
\begin{itemize}\setlength{\itemsep}{0mm}
\item The underlying abelian group is the free abelian group on the set $\gr \bs \mo /\gr$.
\item For all $A,B\in \mo$, the product of $\gr A \gr$ and $\gr B \gr$ is defined to be
\[\sum_{\gr C \gr \in \gr \bs \mo /\gr}
\#
\{
\gr \beta \in \gr \bs \gr B \gr \ |\ C\beta^{-1} \in \gr A\gr 
\}
\cdot \gr C \gr.
\]
\end{itemize}

For every $A \in \mo$, write $T_{\gr, \mo}(A)$ for the element $\gr A \gr$ of $R$.
We define the degree map on $R$ to be the additive map $\degr{R}: R \to \Z$ such that
$T_{\gr, \mo}(A)^{\degr{R}} = \# \gr \bs \gr A \gr$ for every $A \in \mo$.
Notably, it is known that $\degr{R}$ forms a ring homomorphism.

Let $p$ be a prime number, and let $L$ be as in Section \ref{section:intro}. We next recall the Hecke rings associated with $L$ introduced in \cite{Hy2}.
Fix a $\Z$-basis of $L$, and let $r$ be the rank of $L$.
Then, $\Aut_{\Q}^{alg}(L \otimes \Q)$, $\Aut_{\Q_p}^{alg}(L \otimes \Q_p)$, $\End_{\Z}^{alg}(L)$, and $ \End_{\Z_p}^{alg}(L \otimes \Z_p)$ are all identified with subsets of $M_r(\Q_p)$. 
In \cite[Section 2]{Hy2}, the following notation was introduced:
\[
	\begin{array}{llllll}
		\gl &=& \Aut_{\Q}^{alg}(L \otimes \Q), & \glp &=& \Aut_{\Q_p}^{alg}(L \otimes \Q_p), \\ \rule{0ex}{3ex}
		 \dl &=& \End_{\Z}^{alg}(L) \cap \gl, & \dlp &=& \End_{\Z_p}^{alg}(L \otimes \Z_p) \cap \glp, \\ \rule{0ex}{3ex}
		 \gaml  &=& \Aut_{\Z}^{alg}(L), & \gamlp  &=& \Aut_{\Z_p}^{alg}(L \otimes \Z_p).
	\end{array}
\]
The global Hecke rings $\rl$ and the local Hecke ring $\rlp$ are the Hecke rings with respect to $(\gaml, \dl)$ and $(\gamlp,\dlp)$, respectively.

The other global Hecke ring $\hrl$ was introduced in \cite[Section 3]{Hy2}. Define the group $\hgl$ to be the restricted direct product of $\glp$ relative to $\gamlp$ for all prime numbers $p$, that is, the set of elements $(\alpha_p)_p$
of $\prod_p \glp$ such that $\alpha_p \in \gamlp$ for almost all $p$. The monoid $\hdl$ and the group $\hgaml$ denote $ \hgl \cap \prod_p\dlp$ 
and $\prod_p \gamlp$, respectively. Then, we write $\hrl$ for the Hecke ring with respect to $(\hgaml, \hdl)$. 

Section 3 of \cite{Hy2} described relations among these Hecke rings. The local Hecke ring $\rlp$ is related to the global Hecke ring $\hrl$ as follows:
\begin{prop}[{\cite[Proposition 3.1]{Hy2}}]\label{prop:structureofhrl} The following assertions hold:
\begin{enumerate}\setlength{\itemsep}{0mm}
\item The local Hecke ring $\rlp$ is regarded as a subring of $\hrl$ by the map induced by the natural inclusion of $\dlp$ into $\hdl$.
\item For each prime number $q$ with $p \not = q$, the local Hecke rings $\rlp$ and $\rlq$ commute with each other in $\hrl$.
\item $\hrl$ is generated by the family of local Hecke rings $\{ \rlp \}_p$ as a ring.
\end{enumerate} 
\end{prop}
\smallskip

For simplicity, we set
\[
	\tlp = T_{\gamlp, \dlp}, \quad \tl = T_{\gaml, \dl}, \quad \htl = T_{\hgaml, \hdl}.
\]
Then, we have $\tlp(\alpha) = \htl(\alpha)$ in $\hrl$ for each $\alpha \in \dlp$.
Let us relate the global Hecke rings $\hrl$ and $\rl$.
The map $\etal$ denotes the diagonal embedding of $\dl$ into $\prod_p \dlp$.
Then, we define the additive map $\eta^*_L : \hrl \to \rl$ given by $\htl(\halpha) \mapsto \sum_{\beta} \tl(\beta)$, where $\beta$ runs through a complete system of representatives of $\gaml \bs \etal^{-1}(\hgaml \halpha \hgaml) / \gaml$. Let us denote by $\etadl: \gaml \bs \dl \to \hgaml \bs \hdl$ the map induced by $\etal$. Then, the two global Hecke rings are related as follows:

\begin{lemma}[{\cite[Lemma 3.2]{Hy2}}]\label{lemma:bijectivityandmultiplicative}
If the map $\etadl$ is bijective, then $\etaul$ is multiplicative and injective.
\end{lemma}
From now on, we regard $\hrl$ as a subring of $\rl$ if $\etadl$ is bijective.
\smallskip

In the rest of this section, we relate $\hrl$ to the automorphism group of $\hl$.

\newcommand{\bQ}{\mathbf{Q}}

\begin{prop}\label{prop:hgl:anotherdef}
Let $\Af$ be the ring of finite adeles over $\Q$, and set $\bQ = \prod_p \Q_p$. 
Then, the objects $\hgl$, $\hdl$, and $\hgaml$ satisfy the following identities as subsets of $M_r(\bQ)$:
\[
	\begin{array}{llllll}
		 \hgl = \Aut_{\Af}^{alg}(L \otimes \Af), &
		 \hdl = \End_{\hZ}^{alg}(\hl) \cap \hgl, &
		\hgaml  = \Aut_{\hZ}^{alg}(\hl).
	\end{array}
\]
\end{prop}
\begin{proof}
The second and third identities are straightforward consequences of the fact that $\hl$ equals $\prod_p (L\otimes \Z_p)$. Let us prove the first identity.
Denote by $GL'_r(\bQ)$ the restricted direct product of $GL_r(\Q_p)$ relative to $GL_r(\Z_p)$ for all prime numbers $p$. Then, it is easy to see that $GL'_r(\bQ)$ coincides with $GL_r(\Af)$. 
And the group $\hgl$, by definition, equals the intersection of   $GL'_r(\bQ)$ and $\prod_p\glp$. 
Thus, we have \[\hgl = GL_r(\Af) \cap \prod_p\glp.\]

Since $L\otimes\bQ$ is identified with $\prod_p(L\otimes\Q_p)$, the group $\prod_p\glp$ coincides with $\Aut_{\bQ}^{alg}(L \otimes \bQ)$. Hence, we have
\[GL_r(\Af)\cap \prod_p\glp = GL_r(\Af) \cap \Aut_{\bQ}^{alg}(L \otimes \bQ) =  \Aut_{\Af}^{alg}(L \otimes \Af) .\]
This implies the first identity.
\end{proof}


\section{Formal power series and formal Dirichlet series associated with algebras}\label{section:seriesalg}

\newcommand{\hAln}{\widehat{\mathcal{A}}_L(n)}
\newcommand{\Aln}{\mathcal{A}_L(n)}
\newcommand{\Alppk}{\mathcal{A}_{\lp}(p^k)}
\newcommand{\Alp}{\mathcal{A}_{\lp}}

Let $p$ and $L$ be as in the previous section. We set $\lp = L \otimes \Z_p$.
In this section, the formal series $\plp(X)$, $\dls$, and $\hdls$ are defined. Subsequently, their relationship is described.
For a positive integer $n$ and a nonnegative integer $k$, we introduce the following notation:
\begin{align*}
	\hAln &= \left\{ \halpha \in \hdl \ \left| \ [ \hl : \hl^{\halpha}] = n \right. \right\},
	\ 
	\Aln = \left\{ \alpha \in \dl \ \left| \ [ L : L^{\alpha}] = n \right. \right\},\\
	\Alppk &= \left\{ \alpha \in \dlp \ \left| \ [ \lp : \lp^{\alpha}] = p^k \right. \right\},
	\end{align*}
where $\lp^{\alpha}$ is the image of $\lp$ under the endomorphism $\alpha$. Additionally, $\hl^{\halpha}$ and $L^{\alpha}$ are defined in a similar manner. Note that each element of $\hdl$ is regarded as an element of $\End_{\hZ}^{alg}(\hl)$ by Proposition \ref{prop:hgl:anotherdef}.

Now, the formal power series $\plp(X)$ is introduced. We define
\[
	\tlpk = \sum_{\alpha} \tlp(\alpha),\\
\]
where $\alpha$ runs through a complete system of representatives of $\gamlp \bs \Alppk /\gamlp$. The formal power series $\plp(X)$ is defined as the generating function of the sequence $\{\tlpk\}_k$; that is,
\[\plp(X) = \sum_{k\geq 0} \tlpk X^k.\]

Next, the formal Dirichlet series $\hdls$ and $\dls$ are defined. We set
	\[\htln = \sum_{\halpha} \htl(\halpha), \quad 
	\tln = \sum_{\alpha} \tl(\alpha),\]
	where $\halpha$ $(\text{resp.}\ \alpha)$ runs through a complete system of representatives of 
	$\hgaml \bs \hAln /\hgaml$ $(\text{resp.}\ \gaml \bs \Aln/\gaml)$.	
	The formal Dirichlet series $\hdls$ and $\dls$ are the generating functions of the sequences of $\{\htln\}_n$ and $\{\tln\}_n$, respectively; that is,
	\[\hdls = \sum_{n > 0} \htln n^{-s}, \quad \dls = \sum_{n > 0} \tln n^{-s}.\]

Next, $\plp(X)$ is related to $\hdls$. For each element $\halpha$ of $\hdl$, let $\alpha_p$ denote its $\dlp$component. Then, $\htl(\halpha) = \prod_p \tlp(\alpha_p)$ is obtained, where $p$ runs over all prime numbers. 
Here, this infinite product is meaningful since
its terms commute with each other according to Proposition \ref{prop:structureofhrl}, and almost all of them are equal to $1$.
Consequently, the following theorem is proven:

\begin{thm}\label{thm:eulerprodhdls}
The sequence $\{\htln\}_n$ is multiplicative, and the Euler product formula for $\hdls$ holds; that is, 
\[\hdls = \prod_p \plp(p^{-s}),\]
where $p$ runs through all prime numbers.
\end{thm}
\begin{proof}
It is easy to see that $\tlp(p^k) = \htl(p^k)$ in $\hrl$.
Since $\hl$ is isomorphic to $\prod_p \lp$,
it follows that $[\hl:\hl^{\halpha}] = \prod_p [\lp:\lp^{\alpha_p}]$ for each $\halpha \in \hdl$.
Hence, we have 
\[
	\hAln = \prod_p \Alp(p^{v_p(n)}),
\]
where $v_p$ is the $p$-adic valuation. This proves the theorem.
\end{proof}
\medskip

Finally, $\hdls$ is related to $\dls$ using the additive map $\etaul:\hrl \to \rl$.
It is evident that $\etaul$ maps $\htln$ to $\tln$ for each positive integer $n$. Thus, the Euler product formula for $\dls$ is proven.

\begin{thm}\label{thm:eulerproddls}
If the map $\etadl$ is bijective, then the sequence $\{\tln\}_n$ is multiplicative, and the Euler product formula for $\dls$ holds; that is, 
\[\dls = \prod_p \plp(p^{-s}).\]
\end{thm}
\begin{proof}
By assumption, $\hrl$ is considered as a subring of $\rl$. 
Since $\tln = \htln$ and $\dls = \hdls$, Theorem \ref{thm:eulerprodhdls} implies the desired result.
\end{proof}

\section{Case of the free abelian Lie algebra $\Z^r$}\label{section:Zr}

Using the notations in Section \ref{section:seriesalg}, the theory of the Hecke ring with general linear groups as reported by Hecke \cite{H}, Shimura \cite{S2}, and Tamagawa \cite{T} is considered.

Let $r$ be a positive integer. Clearly, $\gZr$ and $\gZrp$ are identified with $GL_r(\Q)$ and $GL_r(\Q_p)$, respectively.
Similarly, we have $\dZr = M_r(\Z)\cap GL_r(\Q)$, 
$\dZrp = M_r(\Z_p)\cap GL_r(\Q_p)$, 
$\gamZr = GL_r(\Z)$, and,
$\gamZrp = GL_r(\Z_p)$.
Thus, the Hecke rings $\rZr$ and $\rZrp$ coincide with the Hecke rings treated in \cite{H}, \cite{S2}, and \cite{T}. Furthermore, the Hecke ring $\hrZr$ is identified with $\rZr$ as follows:
\begin{prop}\label{prop:isomrzr}
The map $\etauZr:\hrZr \to \rZr$ is an isomorphism.
\end{prop}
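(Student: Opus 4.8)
The plan is to prove that $\etauZr$ is injective and surjective separately; injectivity will be inherited from the general machinery, while surjectivity carries the real content of the proposition. For injectivity, I would first verify that $\Z^r$ satisfies the assumption of \cite[Lemma 3.2]{Hy2}. Granting this, the discussion preceding Theorem \ref{thm:eulerproddls} shows that $\etauZr$ is an injective ring homomorphism; this settles injectivity and, more importantly, reduces the proposition to showing that the image of $\etauZr$ contains a set of ring generators of $\rZr$.

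For surjectivity, the crucial input is the identity $\etauZr(\htZrn) = \tZrn$, valid for every positive integer $n$; taking $n = p^k$ shows that every element $T_{\Z^r}(p^k)$ lies in the image of $\etauZr$. It therefore suffices to prove that the family $\{T_{\Z^r}(p^k)\}_{p,k}$ generates $\rZr$ as a ring. I would deduce this from the classical structure theory of the Hecke ring for $GL_r$ (Theorem \ref{thm:rationality:intro} together with Shimura \cite{S2}): by the elementary divisor theorem over $\Z$ and the Chinese remainder theorem, $\rZr$ is generated by the images of the local Hecke rings $\rZrp$ under the natural inclusions $\rZrp \hookrightarrow \rZr$, and for each prime $p$ this inclusion carries the local element $\trpk$ to $T_{\Z^r}(p^k)$. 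Since $\rZrp$ is the polynomial ring $\Z[\trpi : 1 \le i \le r]$, and since a triangularity argument lets one solve for each generator $\trpi$ as a polynomial in the $\trpk$ with $k \le i$ (the double cosets of index $p^i$ other than the class of $\diag[1,\ldots,1,p,\ldots,p]$ already occurring in products of lower generators), the family $\{\trpk\}_k$ generates $\rZrp$. Hence the images $\{T_{\Z^r}(p^k)\}_{p,k}$ generate $\rZr$.

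Combining the two parts, $\etauZr$ is a ring homomorphism whose image contains a generating set of $\rZr$, hence is surjective, and being also injective it is an isomorphism. I expect the main obstacle to be the careful verification that $\{T_{\Z^r}(p^k)\}_{p,k}$ generates $\rZr$ as a ring --- that is, recovering the individual double-coset generators $\trpi$ from the prime-power elements --- which rests on the polynomial-ring description in Theorem \ref{thm:rationality:intro} and the multiplicativity of $\{\tZrn\}_n$ from Theorem \ref{thm:eulerproddls}; the local-to-global passage itself is a routine application of elementary divisor theory.
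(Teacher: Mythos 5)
Your injectivity step is fine and agrees with the paper (which cites Lemmas 3.2 and 3.3 of \cite{Hy2}), but your surjectivity argument has a genuine gap: the family $\{T_{\Z^r}(p^k)\}_{p,k}$ does \emph{not} generate $\rZr$ as a ring over $\Z$, so knowing only that the image of $\etauZr$ contains these elements is not enough. The ``triangularity argument'' you invoke breaks down because of $p$-divisibility. Already for $r=2$: by Shimura's multiplication formula, $T_{\Z^2}(p)^2 = T_{\Z^2}(p^2) + p\,T_{2,p}^{(2)}$, so eliminating the extra double coset of index $p^2$ produces $p\,T_{2,p}^{(2)}$, not $T_{2,p}^{(2)}$. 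Inductively, the recursion $T(p^{k+1}) = T(p)T(p^k) - p\,T_{2,p}^{(2)}T(p^{k-1})$ shows that the subring generated by $\{T_{\Z^2}(p^k)\}_k$ lies inside $\Z[T_{2,p}^{(1)},\, p\,T_{2,p}^{(2)}]$, which is a proper subring of the polynomial ring $\Z[T_{2,p}^{(1)}, T_{2,p}^{(2)}]$ (compare degree-two graded pieces). So you cannot solve for the generator $T_{2,p}^{(2)}$, and the claimed generating set is too small.

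The proposition is nevertheless true because $\etauZr$ sees much more than the aggregated elements $\htZrn$: it is determined on the $\Z$-basis of individual double cosets, and the paper's proof of surjectivity goes exactly through this. By the elementary divisor theorem, the map $\gamZr \bs \dZr / \gamZr \to \hgamZr \bs \hdZr / \hgamZr$ induced by the diagonal embedding of $\dZr$ into $\hdZr$ is a bijection (both sides are classified by the same elementary divisor data), so $\etauZr$ carries a $\Z$-basis of $\hrZr$ bijectively onto a $\Z$-basis of $\rZr$ and is therefore surjective. To repair your argument you would need to replace ``the image contains the $T_{\Z^r}(p^k)$'' with ``the image contains every individual double coset $\gamZr \diag[p^{a_1},\ldots,p^{a_r}]\gamZr$,'' which is precisely the double-coset bijection above; at that point the ring-generation detour becomes unnecessary.
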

\begin{proof}
Lemma 3.3 of \cite{Hy2} implies that $\etauZr$ is an injective homomorphism.
Moreover, the map $\gamZr \bs \dZr / \gamZr \to \hgamZr \bs \hdZr / \hgamZr$
induced by $\etaZr$, is bijective according to the elementary divisor theorem.
Note that, in \cite{Hy2}, $\Z^r$ is defined as the ring of the direct sum of $r$-copies of $\Z$, which is incorrect. It is correct to define $\Z^r$ as the abelian free Lie algebra of rank $r$, as in the present study.
\end{proof}

Certainly, the formal power series $\pZrp$(X) equals the local Hecke series treated in \cite{H} and \cite{T}. The following theorem was proved:
\begin{thm}[{\cite[Satz 14]{H}, \cite[Theorem 3]{T}}]\label{thm:rationality}
Let \[\trpi = \gamZrp \diag[1,...,1,\overbrace{p,...,p}^i] \gamZrp\] for each $i$ with $1 \leq i \leq r$. Then, the following assertions hold:

	\begin{enumerate}\setlength{\itemsep}{0mm}
		\item $\rZrp$ is the polynomial ring over $\Z$ in variables $\trpi$ with $1 \leq i \leq r$.
		\item The series $\pZrp(X)$ is a rational function over $\rZr$, more precisely, 
		\[f_{r,p}(X)\pZrp(X) = 1,\]
		where $f_{r,p}(X) = \sum_{i=0}^{r}(-1)^ip^{i(i-1)/2}\trpi X^i$.
		Particularly,
		\[f_{2,p}(X) = 1 - T_{2,p}^{(1)}X + pT_{2,p}^{(2)}X^2. \]
	\end{enumerate}
\end{thm}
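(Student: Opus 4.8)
The plan is to reduce both assertions to the combinatorics of lattices in $\Q_p^r$, using the elementary divisor theorem to index the $\Z$-basis and then feeding the resulting structure constants into (for assertion 1) a triangularity argument and (for assertion 2) the finite $q$-binomial theorem. Throughout I identify $\gamZrp$ with $GL_r(\Z_p)$ and $\dZrp$ with $M_r(\Z_p)\cap GL_r(\Q_p)$, and I set $\Lambda_0=\Z_p^r$. By the elementary divisor theorem each double coset in $\gamZrp\bs\dZrp/\gamZrp$ has a unique representative $\diag[p^{\nu_1},\dots,p^{\nu_r}]$ with $\nu_1\ge\cdots\ge\nu_r\ge0$; I write $c_\nu$ for it, so that $\rZrp$ is $\Z$-free on $\{c_\nu\}$, the generator is $\trpi=c_{(1^i)}$, and $\trpk=\sum_{|\nu|=k}c_\nu$. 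I let $\rZrp$ act on the free abelian group on lattices by $c_\nu\cdot[\Lambda]=\sum[M]$ over sublattices $M\subseteq\Lambda$ with $\Lambda/M$ of type $\nu$; in particular $\trpi\cdot[\Lambda]=\sum[M]$ over $M$ with $p\Lambda\subseteq M\subseteq\Lambda$ and $[\Lambda:M]=p^i$.

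For assertion 1 I would first note that $\rZrp$ is commutative: transposition $\alpha\mapsto{}^t\alpha$ is an anti-automorphism fixing every double coset (since $\alpha$ and ${}^t\alpha$ have the same elementary divisors), hence fixes the basis $\{c_\nu\}$ and forces commutativity. Then I would prove that products of the generators expand unitriangularly: the monomial $\prod_i(\trpi)^{m_i}$ equals $c_\nu+\sum_{\mu\prec\nu}a_\mu c_\mu$ (dominance order, $a_\mu\in\Z_{\ge0}$), where $\nu_j=\sum_{i\ge j}m_i$ and the leading coefficient is $1$. Since $(m_1,\dots,m_r)\mapsto\nu$ is a bijection onto partitions with at most $r$ parts (inverse $m_j=\nu_j-\nu_{j+1}$), the transition matrix between the monomials and $\{c_\nu\}$ is unitriangular over $\Z$; hence the monomials form a $\Z$-basis, i.e. the $\trpi$ are algebraically independent and generate, proving $\rZrp=\Z[\trp^{(1)},\dots,\trp^{(r)}]$. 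This unitriangular expansion mirrors the expansion of $\prod_i e_i^{m_i}$ in the monomial symmetric basis, which is the conceptual reason it holds over $\Z$.

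For assertion 2, by commutativity it suffices to show that the coefficient of $X^n$ in $f_{r,p}(X)\pZrp(X)$ is $\delta_{n,0}$, i.e.
\[\sum_{i=0}^{r}(-1)^i p^{i(i-1)/2}\,\trpi\,\trp(p^{n-i})=\delta_{n,0}\]
(with $\trp^{(0)}=1$ and $\trp(p^{m})=0$ for $m<0$). I would compare coefficients of $c_\nu$ after letting both sides act on $[\Lambda_0]$. Fixing $M$ of type $\nu$ with $[\Lambda_0:M]=p^n$, the coefficient of $[M]$ in $\trpi\,\trp(p^{n-i})$ counts lattices $N$ with $M\subseteq N\subseteq\Lambda_0$, $[N:M]=p^i$ and $pN\subseteq M$; these are exactly the $i$-dimensional subspaces of $(p^{-1}M\cap\Lambda_0)/M$, an $\F_p$-space whose dimension is the number $d$ of nonzero parts of $\nu$. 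Hence this coefficient is the Gaussian binomial $\binom{d}{i}_p$, and the coefficient of $c_\nu$ above becomes $\sum_{i=0}^{d}(-1)^i p^{i(i-1)/2}\binom{d}{i}_p=\prod_{k=0}^{d-1}(1-p^k)$ by the finite $q$-binomial theorem. This product is $1$ for $d=0$ (forcing $\nu=0$, $n=0$) and $0$ as soon as $d\ge1$, because of the factor $1-p^0$; this is exactly the required identity, giving $f_{r,p}(X)\pZrp(X)=1$.

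The main obstacle is the precise verification of the two structure computations. For assertion 1 the delicate point is the leading-coefficient-one claim, equivalently the integral unitriangularity of the expansion, which rests on a careful count of lattice chains (Hall-polynomial combinatorics). For assertion 2 the crux is the exact identification of the counting space: one must check both that $(p^{-1}M\cap\Lambda_0)/M$ has dimension equal to the number of nonzero parts of $\nu$ and that the arithmetic weight $p^{i(i-1)/2}$ built into $f_{r,p}$ coincides with the exponent $\binom{i}{2}$ occurring in the $q$-binomial theorem. Once these match, the cancellation is automatic and uniform in $\nu$, the single factor $1-p^0$ annihilating every coefficient with $n\ge1$.
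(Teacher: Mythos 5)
The paper gives no proof of this theorem: it is quoted from Hecke and Tamagawa (see also Shimura's book, Ch.~3), so there is no in-paper argument to compare against. Your proof is a correct reconstruction of the classical one --- commutativity via the transposition anti-automorphism, unitriangular expansion of the monomials in the $\trpi$ against the double-coset basis $\{c_\nu\}$ for assertion 1, and the $q$-binomial cancellation over the $\F_p$-space $(p^{-1}M\cap\Lambda_0)/M$ (whose dimension is the number of nonzero parts of $\nu$) for assertion 2 --- and both structure computations check out, including the match between the weight $p^{i(i-1)/2}$ and the exponent $\binom{i}{2}$ in the $q$-binomial theorem. The one step you leave as a sketch, the leading-coefficient-one claim in the unitriangular expansion, is indeed the crux of assertion 1; it is the standard Hall-polynomial fact that a chain of sublattices realizing the extremal (componentwise-sum) type is unique, and filling it in completes the argument.
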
 

\begin{rem}
Theorem \ref{thm:rationality} in the case $r=2$ was proved in \cite{H}.
For arbitrary $r$, it was demonstrated in \cite{T}.
\end{rem}
\medskip

The series $\dZrs$ is none other than the formal Dirichlet series treated in \cite[Chapter 3]{S2}. Since $\etadZr$ is bijective, the following theorem is obtained:
\begin{thm}\label{thm:eulerproductZr}
The Euler product formulae for $\dZrs$ and $\hdZrs$ hold; i.e., 
\[\dZrs = \hdZrs = \prod_p \pZrp(p^{-s}),\]
where $p$ runs through all prime numbers. 
\end{thm}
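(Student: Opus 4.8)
The plan is to obtain both Euler product formulae as immediate specializations of the general results of Section \ref{section:seriesalg} to $L = \Z^r$. The essential input, already flagged in the sentence preceding the statement, is that $\Z^r$ satisfies the assumption of \cite[Lemma 3.2]{Hy2}; granting this, Theorems \ref{thm:eulerprodhdls} and \ref{thm:eulerproddls} apply directly and there is nothing further to compute.

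Concretely, I would proceed in three steps. First, apply Theorem \ref{thm:eulerprodhdls} with $L = \Z^r$, which holds with no extra hypothesis, to get
\[\hdZrs = \prod_p \pZrp(p^{-s}).\]
Second, invoking that $\Z^r$ satisfies the assumption of \cite[Lemma 3.2]{Hy2}, apply Theorem \ref{thm:eulerproddls} to obtain
\[\dZrs = \prod_p \pZrp(p^{-s}).\]
Combining these two identities yields $\dZrs = \hdZrs = \prod_p \pZrp(p^{-s})$, as desired. Equivalently, the equality $\dZrs = \hdZrs$ follows from Proposition \ref{prop:isomrzr}, since the isomorphism $\etauZr$ sends $\htZrn$ to $\tZrn$ for each $n$ and hence matches the two series coefficientwise.

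The only point requiring genuine verification is the hypothesis of \cite[Lemma 3.2]{Hy2} for $\Z^r$. Here one uses the identification of $\dZr$, $\gamZr$, $\dZrp$, and $\gamZrp$ with $M_r(\Z) \cap GL_r(\Q)$, $GL_r(\Z)$, $M_r(\Z_p) \cap GL_r(\Q_p)$, and $GL_r(\Z_p)$, recalled at the opening of this section, together with the elementary divisor theorem, exactly as in the proof of Proposition \ref{prop:isomrzr}. The elementary divisor theorem ensures that each $\gamZr$-double coset in $\dZr$ is determined by its elementary divisors and thus decomposes compatibly over the primes, which is precisely the local--global condition that \cite[Lemma 3.2]{Hy2} requires. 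Beyond this, the theorem is a formal consequence of the earlier results, so I expect no substantive obstacle.
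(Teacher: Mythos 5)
Your proposal matches the paper's (implicit) argument exactly: the paper derives this theorem by noting that $\Z^r$ satisfies the assumption of \cite[Lemma 3.2]{Hy2} and then appealing to Theorems \ref{thm:eulerprodhdls} and \ref{thm:eulerproddls}. Your additional remarks on verifying that hypothesis via the elementary divisor theorem and on the alternative route through Proposition \ref{prop:isomrzr} are consistent with what the paper does elsewhere (notably in the proof of Proposition \ref{prop:isomrzr}), so there is nothing to correct.
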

\begin{proof}
It is an immediate consequence of Theorems \ref{thm:eulerprodhdls} and \ref{thm:eulerproddls}.
\end{proof}
\medskip

Therefore,  the following theorem is obtained: 
\begin{thm}[{\cite[Theorem 3.21]{S2}}]\label{thm:globalratinality}
Define $I_r(s)$ to be the infinite product $\prod_pf_{r,p}(p^{-s})$. Then, the following is obtained:
\[I_r(s)\dZrs = I_r(s)\hdZrs = 1.\]
\end{thm}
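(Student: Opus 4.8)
The plan is to combine the two theorems already at hand. By Theorem \ref{thm:eulerproductZr} we know that $\dZrs = \hdZrs = \prod_p \pZrp(p^{-s})$, so the two asserted equalities coincide; it therefore suffices to prove $I_r(s)\dZrs = 1$. First I would substitute $X = p^{-s}$ into the rationality identity of Theorem \ref{thm:rationality}(2), obtaining for every prime $p$ the local identity $f_{r,p}(p^{-s})\,\pZrp(p^{-s}) = 1$ in $\rZrp$. Both $f_{r,p}(p^{-s})$ and $\pZrp(p^{-s})$ are formal Dirichlet series supported on the powers of $p$ alone, the former finite and the latter with coefficients $\trpi$ exhibited in Theorem \ref{thm:rationality}(1).

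Next, by the definition of $I_r(s)$ and the Euler product for $\dZrs$, I would write
\[
I_r(s)\dZrs = \left(\prod_p f_{r,p}(p^{-s})\right)\left(\prod_p \pZrp(p^{-s})\right),
\]
and then regroup the right-hand side prime by prime into $\prod_p\bigl(f_{r,p}(p^{-s})\,\pZrp(p^{-s})\bigr)$. Invoking the local identity, every Euler factor collapses to $1$, whence $I_r(s)\dZrs = \prod_p 1 = 1$; the same conclusion holds for $\hdZrs$ because $\hdZrs = \dZrs$. The regrouping is legitimate for the same bookkeeping reasons used in the proof of Theorem \ref{thm:eulerprodhdls}: the coefficient of $n^{-s}$ in each series is, for fixed $n$, a finite sum dictated by the prime factorization of $n$, so the infinite products are well defined termwise. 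Moreover the local factors at distinct primes commute by assertion 2 of \cite[Proposition 3.1]{Hy2}, and within a single prime $f_{r,p}$ and $\pZrp$ are a polynomial and a power series in the commuting operators $\trpi$ (Theorem \ref{thm:rationality}(1) presents $\rZrp$ as a commutative polynomial ring over $\Z$), so all factors entering the rearrangement commute.

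The only genuine point requiring care is this last step, namely justifying that the product of the two Euler products may be reassembled prime by prime as an identity of formal Dirichlet series. I expect this to be the main, though still routine, obstacle, since it is precisely where the commutativity across primes and the finiteness of each coefficient must be used together; once these are established, the theorem is an immediate consequence of the local rationality of Theorem \ref{thm:rationality}.
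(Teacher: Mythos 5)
Your proposal is correct and follows the same route as the paper, which simply derives the theorem by combining the Euler product formula of Theorem \ref{thm:eulerproductZr} with the local rationality of Theorem \ref{thm:rationality}; the paper gives no further detail, so your careful justification of the prime-by-prime regrouping (using commutativity of local factors at distinct primes and the termwise finiteness of the Dirichlet coefficients) only makes explicit what the paper leaves implicit.
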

\begin{proof}
This follows from Theorems \ref{thm:rationality} and \ref{thm:eulerproductZr}.
\end{proof}

\section{Case of the Heisenberg Lie algebra}\label{section:Hei}

This section studies the proposed series in the case of the Heisenberg Lie algebra $\Hei$.

\subsection{Local properties}\label{subsection:localHei}

Let us recall the main theorem of \cite{Hy}. 
For an element $A$ of $\gZtp$ and an element $\va$ of $\Q_p^2$, denote by $(A, \va)$ the element 
$
\begin{pmatrix}
	A & \va \nonumber \\
	0 \ 0 & |A|  \\ 
\end{pmatrix} 
$
of
$GL_3(\Q_p)$, where $|A|$ means the determinant of the matrix $A$.
Fix a system $\{x_1,x_2\}$ of free generators of $\Hei$. Then, the set $\{x_1,x_2,[x_1,x_2]\}$ forms a basis of $\Hei$. Hence, the group $\gHp$ is identified with the following subset of $GL_3(\Q_p)$:
\[
\left\{
(A,\va) \ 
\left|
\ 
A \in \gZtp, \ \va \in \Q_p^2
\right.
\right\}.
\]
In addition, an element $(A,\va)$ of $\gHp$ is contained in $\dHp$ (resp. $\gamHp$) if and only if
$A$ is in $\dZtp$ (resp. $\gamZtp$), and $\va$ is in $\Z_p^2$. 

The following three ring homomorphisms $s$, $\phi$, and $\theta$ were introduced in \cite[Section 6]{Hy}:
\begin{df}\label{df:local:maps}
For simplicity, we put $\deg = \degr{\rHp}$.
The ring homomorphisms $s: \rZtp \to \rHp$, $\phi: \rHp \to \rZtp$, and $\theta: \rHp \to \rHp$ are defined by
\begin{align*}
\tZtp(A)^s &= \tHp(A,\vzero) \ \text{for each $A \in \dZtp$},\\
\tHp(A,\va)^\phi &= \frac{\tHp(A,\va)^{\deg}}{\tHp(A,\vzero)^{\deg}}\tZtp(A) \ \text{for each $(A,\va) \in \dHp$},\\
\tHp(A,\va)^\theta&= \frac{\tHp(A,\va)^{\deg}}{\tHp(A,p\va)^{\deg}}\tHp(A,p\va) \ \text{for each $(A,\va) \in \dHp$}.\\
\end{align*}
\end{df}

\begin{rem}
Although the multiplicativity of $s$, $\phi$, and $\theta$ is not obvious by the definition, it was proved in \cite[Section 6]{Hy}.
\end{rem}
\smallskip

Some relations among the three ring homomorphisms are introduced.

\begin{prop}\label{prop:equalitesmorphisms}
The ring homomorphisms $s$, $\phi$, and $\theta$ satisfy the following properties:
\[\phi\circ s = id_{\rZtp},\quad \theta \circ s = s, \quad \phi \circ \theta = \phi.\]
\end{prop}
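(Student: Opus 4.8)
The plan is to verify each of the three composition identities on a convenient spanning set, exploiting that all three maps are $\Z$-linear and that $s$ and $\theta$ are ring homomorphisms. First I would recall from \cite[Section 6]{Hy} the explicit description of the maps on double-coset representatives: $\rZtp$ and $\rHp$ are free $\Z$-modules with bases $\{\tZtp(A)\}$ and $\{\tHp(A,\va)\}$ respectively, and $s$, $\phi$, $\theta$ are given by explicit formulas in terms of these bases. In particular, $\phi$ discards the translation part $\va$ (so that $\phi(\tHp(A,\va)) = \tZtp(A)$ up to the normalization fixed in \cite{Hy}), while $\theta$ acts only on the $\va$-component within each fiber of $\phi$, leaving the $GL_2$-part $A$ in the same double coset.

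For the first identity $\phi \circ s = id_{\rZtp}$, both sides are ring homomorphisms, so by Theorem \ref{thm:rationality}(1) — which presents $\rZtp$ as the polynomial ring over $\Z$ in $T_{2,p}^{(1)}$ and $T_{2,p}^{(2)}$ — it suffices to check equality on these two generators. I would compute $s(T_{2,p}^{(1)})$ and $s(T_{2,p}^{(2)})$ from the definition of $s$ and then apply $\phi$, recovering each $T_{2,p}^{(i)}$. The same reduction handles $\theta \circ s = s$: again both sides are ring homomorphisms on $\rZtp$, so I need only verify $\theta(s(T_{2,p}^{(i)})) = s(T_{2,p}^{(i)})$ for $i = 1, 2$. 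The key point here is that $s$ takes values in double cosets $\tHp(A,\vzero)$ with trivial translation part (the ``central'' elements), and $\theta$ fixes such elements; the check becomes immediate once the image of $s$ is written out explicitly.

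The third identity $\phi \circ \theta = \phi$ is an equality of ring homomorphisms $\rHp \to \rZtp$, but since $\rHp$ need not be finitely generated I would instead verify it as an equality of additive maps on the full $\Z$-basis $\{\tHp(A,\va)\}$. For each basis element I compute $\theta(\tHp(A,\va))$, apply $\phi$, and compare with $\phi(\tHp(A,\va))$. Because $\theta$ moves only the translation component and leaves $A$ within the same double coset, both computations collapse onto $\tZtp(A)$, yielding the claimed equality termwise.

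The main obstacle is the bookkeeping of coset representatives for $\gamHp \bs \dHp$: one must track precisely how $\theta$ redistributes the translations $\va \in \Z_p^2$ among the left $\gamHp$-cosets underlying a given double coset, and confirm that after forgetting $\va$ via $\phi$ the multiplicities match those of $\tZtp(A)$. Carrying this out amounts to unwinding the definitions of $s$, $\phi$, and $\theta$ from \cite{Hy} and matching the representative systems on the two sides; the three identities then follow by direct, if careful, computation.
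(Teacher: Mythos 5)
Your strategy is sound, but note that the paper's own proof is purely a citation: the first and third identities are quoted from \cite[Proposition 6.9]{Hy}, and the second from \cite[Lemma 6.5]{Hy} and \cite[Proposition 6.6]{Hy}. What you propose is, in effect, a self-contained re-derivation of those cited facts. Your reductions are the right ones: since $\phi\circ s$, $\theta\circ s$ and $s$ are ring homomorphisms out of $\rZtp$, and $\rZtp$ is the polynomial ring $\Z[T_{2,p}^{(1)},T_{2,p}^{(2)}]$ by Theorem \ref{thm:rationality}(1), the first two identities need only be checked on the two generators; and since $\phi\circ\theta$ and $\phi$ are in particular additive, the third can be checked on the $\Z$-basis $\{\tHp(A,\va)\}$ of $\rHp$, which correctly avoids relying on any finite generation of $\rHp$ over $\rZtp$ (the paper explicitly doubts this holds). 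The one point to treat with care is your description of $\phi$ as simply forgetting the translation part: as the proof of Proposition \ref{prop:exampleofphipsi} shows, $\phi_p(\tHp(p^{2k}))=p^k\tZtp(p^k)$, so a nontrivial multiplicity factor (counting the left cosets contributed by the $\va$-directions) enters, and the termwise comparison in your third step must track these multiplicities exactly --- this is the bookkeeping you yourself flag as the main obstacle, and it is precisely what \cite[Proposition 6.9]{Hy} packages. As written, yours is a correct plan rather than a finished computation, but carried out it would give the reader a proof that does not require chasing the references into \cite{Hy}.
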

\begin{proof}
It is an easy consequence of Definition \ref{df:local:maps}.
\end{proof}
\medskip
 
Our previous work \cite{Hy} defined the element $T_2(p^k)$ of $\rHp$ for each nonnegative integer $k$ as follows:
\[
T_2(p^k) = \sum_{(A,\va)} \tHp(A,\va),
\]
where $(A, \va)$ runs through a complete system of representatives of $\gamHp \bs \dHp / \gamHp$ satisfying
$v_p(|A|) = k$.
The formal power series $D_{2,2}(X)$ was defined as the generating function of the sequence $\{T_2(p^k)\}_k$; that is,
\[
D_{2,2}(X) = \sum_{k \geq 0} T_2(p^k) X^k.
\]

The main theorem of our previous work \cite{Hy} is as follows:

\begin{thm}[{\cite[Theorem 7.8]{Hy}}]\label{thm:local:Heisenberg:previous}
Let $T_{2,p}^{(1)}$ and $T_{2,p}^{(2)}$ be as in Theorem \ref{thm:rationality}.
For simplicity, let us set $T_p(1,p) = T_{2,p}^{(1)}$ and $T_p(p,p) = T_{2,p}^{(2)}$. Define $Y=pX$. Then, $D_{2,2}(X)$ satisfies the following identity:
\[D_{2,2}(X)^{\theta^2} - T_p(1,p)^s D_{2,2}(X)^{\theta} Y + p T_p(p,p)^s D_{2,2}(X) Y^2=1,\]
where $D_{2,2}(X)^{\theta}$ is the coefficient-wise image of $D_{2,2}(X)$ under $\theta$, and $D_{2,2}(X)^{\theta^2}$ is defined similarly.
\end{thm}
\bigskip

The sequences $\{T_2(p^k)\}_{k\geq0}$ and $\{\tHp(p^k)\}_{k\geq0}$ are related as follows:
\begin{prop}\label{prop:relationT2tHpk}
$\tHp(p^{2k})=T_2(p^k)$ and $\tHp(p^{2k+1}) = 0$ for each $k$.
\end{prop}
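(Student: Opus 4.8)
The plan is to reduce both assertions to a single index computation: for an arbitrary $\alpha = (A,\va) \in \dHp$ I would compute $[\Hei_p : \Hei_p^\alpha]$ and show that it is \emph{always} an even power of $p$, namely $p^{2v_p(\det A)}$. Once this is established, the vanishing $\tHp(p^{2k+1})=0$ is immediate, and the identity $\tHp(p^{2k})=T_2(p^k)$ comes down to checking that the two double-coset sums range over the same set of representatives.

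First I would spell out the action of $\alpha=(A,\va)$ on the lattice $\Hei_p$. Fixing a basis $x,y,z$ with $[x,y]=z$ and $z$ central, the endomorphism $\alpha$ sends $x$ and $y$ into the $\langle x,y\rangle$-plane according to the two columns of $A$, shifted into the centre by $a_1 z$ and $a_2 z$ respectively, while the bracket relation forces $\alpha(z)=[\alpha(x),\alpha(y)]=\det(A)\,z$. Hence the image lattice $\Hei_p^\alpha$ is generated by $\alpha(x),\alpha(y)$ and $\det(A)\,z$, whose coordinate matrix with respect to $x,y,z$ is
\[
\begin{pmatrix} A_{11} & A_{12} & 0 \\ A_{21} & A_{22} & 0 \\ a_1 & a_2 & \det A\end{pmatrix},
\]
with determinant $(\det A)^2$. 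Therefore $[\Hei_p:\Hei_p^\alpha]=p^{v_p((\det A)^2)}=p^{2v_p(\det A)}$, an even power of $p$ as claimed; note that the central shift $\va$ contributes nothing, since the third column already carries $\det(A)$ in the $z$-direction.

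From this the second assertion follows at once: no $\alpha\in\dHp$ has image of index $p^{2k+1}$, so the defining sum for $\tHp(p^{2k+1})$ runs over the empty set and equals $0$. For the first assertion I would recall that $T_2(p^k)$ of \cite[Definition 7.1]{Hy} is, by construction, the sum of the double cosets $T_2(A,\va)=\tHp(A,\va)$ over a complete system of representatives with $v_p(\det A)=k$ (the Heisenberg counterpart of the $GL_2$ operator summing over determinant-valuation $k$). Since the computation above shows $[\Hei_p:\Hei_p^\alpha]=p^{2k}$ exactly when $v_p(\det A)=k$, the representatives summed in $\tHp(p^{2k})$ and in $T_2(p^k)$ coincide, giving $\tHp(p^{2k})=T_2(p^k)$.

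The main obstacle is bookkeeping rather than anything conceptual. I must pin down precisely the indexing convention of \cite[Definition 7.1]{Hy}, confirming that $T_2(p^k)$ really aggregates the cosets with $v_p(\det A)=k$ rather than with $[\Hei_p:\Hei_p^\alpha]=p^k$ or some other normalization; a mismatch here would shift the power of $p$. I must also be careful that the central direction is handled correctly in the index computation, verifying that $\alpha(z)=\det(A)\,z$ is genuinely forced by the Lie bracket and that the shift $\va$ produces no finer sublattice in the $z$-direction. Once the normalization in \cite{Hy} is fixed, the identification of the two families of representatives is routine.
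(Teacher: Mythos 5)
Your proposal is correct and follows essentially the same route as the paper, whose entire proof is the single observation that $v_p([\Hei_p : \Hei_p^{(A,\va)}]) = 2v_p(\det A)$ for every $(A,\va) \in \dHp$; you simply make the lattice computation behind that identity explicit (the coordinate matrix with determinant $(\det A)^2$, forced by $\alpha(z)=[\alpha(x),\alpha(y)]=\det(A)\,z$). Your reading of $T_2(p^k)$ as the sum over double cosets with $v_p(\det A)=k$ is indeed the normalization the paper relies on.
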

\begin{proof}
It is evident that $v_p([\Hei_p : \Hei_p^{(A,\va)}]) = 2v_p(|A|)$ for every $(A,\va) \in \dHp$. This completes the proof.
\end{proof}
\medskip

The relation between $D_{2,2}(X)$ and $\pHp(X)$ is described as follows:
\begin{cor}  \label{cor:equality:dttphp}$D_{2,2}(X^2) = \pHp(X)$.
\end{cor}
\begin{proof}
It is an immediate consequence of the proposition above.
\end{proof}
\medskip

The Hecke ring $\rHp$ forms a ring over $\rZtp$ via the ring homomorphism $s$. 
Moreover, owing to the second identity of Proposition \ref{prop:equalitesmorphisms}, 
$\theta$ is a ring homomorphism over $\rZtp$.
Thus, $\rHp$ is a module (not a ring!) over the polynomial ring $\rZtp[\theta]$ in one variable $\theta$.
Further, the maps $s$, $\phi$, and $\theta$ depend on $p$. Subsequently, we set $s_p=s$, $\phi_p=\phi$, and $\theta_p = \theta$.
Therefore, Theorem \ref{thm:local:Heisenberg:previous} can be rewritten as follows:
\begin{thm}\label{thm:local:Heisenbergex}
Let $f_{2,p}(X)$ be as in Theorem \ref{thm:rationality}, and let us keep the notation of Theorem \ref{thm:local:Heisenberg:previous}.
Then, $\pHp(X)$ satisfies the following identity:
\[g_{2,p}(\theta_p;pX^2)\pHp(X)=1,\]
where 
\[g_{2,p}(\theta_p;X) = \theta_p^2 \cdot f_{2,p}(X/\theta_p) = \theta_p^2 - T_p(1,p) \theta_p X + p T_p(p,p)X^2.\]
\end{thm}
\begin{proof}
Clear.
\end{proof}
\medskip

We have just introduced the three ring homomorphism, of which $\phi_p$ has not been used so far. 
In fact, it has been shown that $\phi_p$ plays a role establishing the relationship between $\pHp(X)$ and $\pZtp(X)$ as follows:
\begin{thm}[{\cite[Theorem 7.5]{Hy}}]\label{thm:local:HeiZpt}
$\pHp(X)^{\phi_p} = \pZtp(pX^2). $

\end{thm}

\subsection{Global properties}\label{subsection:globalHei}
In this subsection, the Dirichlet series $\dHs$ and $\hdHs$ are considered. 
Since the bijectivity of $\etadH$ was proved in \cite[Lemma 3.4]{Hy2}, the map $\etauH$ is an injective ring homomorphism.
Moreover, the nonsurjectivity of $\etauH$ was shown in \cite[Section 4]{Hy2}. Hence, the global Hecke ring $\hrH$ is a proper subring of $\rH$.
However, the following theorem can be obtained:

\begin{thm}\label{thm:eulerproducthei}
The Euler product formulae for $\dHs$ and $\hdHs$ hold; that is, 
\[\dHs = \hdHs = \prod_p \pHp(p^{-s}).\]
\end{thm}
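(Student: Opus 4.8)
The plan is to deduce this directly from the general machinery of Section \ref{section:seriesalg} by specializing to $L = \Hei$, so that essentially no new computation is needed beyond one cited structural input. First I would record that $\Hei$, being the quotient of the free Lie algebra on two generators by $[L_2,[L_2,L_2]]$, is free of rank $3$ as an abelian group, with basis furnished by the two generators together with their bracket. Thus $\Hei$ is an algebra of the kind treated in Section \ref{section:seriesalg}, and Theorem \ref{thm:eulerprodhdls} applies verbatim with $L = \Hei$. This immediately gives the multiplicativity of $\{\htHn\}_n$ and the Euler product formula $\hdHs = \prod_p \pHp(p^{-s})$, with no further argument.

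The remaining assertions, namely $\dHs = \hdHs$ together with the Euler product for $\dHs$ itself, are precisely the conclusion of Theorem \ref{thm:eulerproddls}, which requires only that $\Hei$ satisfy the hypothesis of \cite[Lemma 3.2]{Hy2}. I would supply this by invoking \cite[Lemma 3.4]{Hy2}, which verifies that assumption for the Heisenberg algebra. Once it is in hand, $\etauH : \hrH \to \rH$ is an injective ring homomorphism, so $\hrH$ is identified with a subring of $\rH$; since $\etauH$ sends $\htHn$ to $\tHn$ for every positive integer $n$, the two Dirichlet series coincide and Theorem \ref{thm:eulerproddls} transports the multiplicativity and the Euler product from $\hdHs$ to $\dHs$. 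The displayed chain of equalities then reads off directly.

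The only genuinely substantive step is the verification of the hypothesis of \cite[Lemma 3.2]{Hy2} for $\Hei$, which is exactly the content of \cite[Lemma 3.4]{Hy2}; everything else is formal bookkeeping. I expect this to be where the noncommutativity of $\Hei$ makes itself felt: in contrast to the case of $\Z^r$, where $\etauZr$ is even an isomorphism by Proposition \ref{prop:isomrzr} and one may compare double-coset spaces via elementary divisors, here $\hrH$ is only a proper subring of $\rH$, so the identification $\dHs = \hdHs$ cannot be obtained by such a direct comparison and genuinely relies on the cited lemma. The present proof therefore amounts to assembling Theorems \ref{thm:eulerprodhdls} and \ref{thm:eulerproddls} around that structural input.
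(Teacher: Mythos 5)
Your proposal is correct and follows essentially the same route as the paper, which derives the theorem by citing \cite[Lemma 3.4]{Hy2} to verify the hypothesis of \cite[Lemma 3.2]{Hy2} for $\Hei$ and then applying Theorems \ref{thm:eulerprodhdls} and \ref{thm:eulerproddls}. The paper gives no further detail, so your additional bookkeeping (that $\etauH$ sends $\htHn$ to $\tHn$, hence $\dHs = \hdHs$) is just an unpacking of the same argument.
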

\begin{proof}
It is an immediate consequence of Theorems \ref{thm:eulerprodhdls} and \ref{thm:eulerproddls}.
\end{proof}
\medskip

The ring homomorphisms $\hs$, $\hphi$, and $\htheta_p$ are defined as follows: 

\begin{df}\label{df:extenstionmorphisms}
The ring homomorphisms $\hs:\hrZt \to \hrH$ and $\hphi:\hrH \to \hrZt$ are defined by
\begin{align*}
	\htZt(\widehat{A})^{\hs} = \prod_p \tZtp(A_p)^{s_p} \text{ for each $\widehat{A} \in \hdzt$},\\
	\htH(\halpha)^{\hphi } = \prod_p \tHp(\alpha_p)^{\phi_p} \text{ for each $\halpha \in \hdH$},
\end{align*}
where $A_p$ $(\text{resp.} \ \alpha_p)$ is $\dZtp$ $(\text{resp.} \  \dHp)$ component of $\widehat{A}$ $(\text{resp.}\ \halpha)$ for each $p$.

The ring homomorphism $\htheta_p:\hrH \to\hrH$ is defined by
\[\htH(\halpha)^{\htheta_p} = \tHp(\alpha_p)^{\theta_p} \cdot \prod_{q\not = p} \tHq(\alpha_q) \ \text{ for each $\halpha \in \hdH$}.
\]
\end{df}
\medskip

Consequently, the following proposition is obtained: 
\begin{prop}\label{prop:equalitesglobalmorphisms}
The following equalities hold:
\begin{enumerate}\setlength{\itemsep}{0mm}
\item $\hphi \circ \hs = id_{\hrZt}$,
\item $\htheta_p \circ \hs = \hs$ and $\hphi \circ \htheta_p = \hphi$ for each $p$,
\item $\htheta_p \circ \htheta_q = \htheta_q \circ \htheta_p$ for any two prime numbers $p$, $q$.
\end{enumerate}
\end{prop}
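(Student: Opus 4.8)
The plan is to reduce all three identities to the local identities of Proposition \ref{prop:equalitesmorphisms}, exploiting that $\hs$, $\hphi$ and each $\htheta_p$ are ring homomorphisms defined factor-by-factor with respect to the decompositions $\htZt(\widehat{A}) = \prod_p \tZtp(A_p)$ and $\htH(\halpha) = \prod_p \tHp(\alpha_p)$ coming from Theorem \ref{thm:eulerprodhdls}. Since each asserted equality is between additive maps, and $\hrZt$ (resp.\ $\hrH$) is generated as an abelian group by the elements $\htZt(\widehat{A})$ (resp.\ $\htH(\halpha)$), it suffices to check every equality on these generators.

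The first thing I would isolate is a \emph{restriction property}. Evaluating Definition \ref{df:extenstionmorphisms} on a generator supported at a single prime $p$ — that is, on $\halpha$ with $\alpha_q \in \Gamma_{\mathcal{H}_q}$ (equivalently $\tHq(\alpha_q) = 1$) for all $q \neq p$ — the defining products collapse, because each homomorphism fixes $1$. This yields: $\hs$ restricts to $s_p$ on the subring $\rZtp \subset \hrZt$; $\hphi$ restricts to $\phi_p$ on $\rHp \subset \hrH$; $\htheta_p$ restricts to $\theta_p$ on $\rHp$; and $\htheta_p$ restricts to the identity on $R_{\mathcal{H}_q}$ for every $q \neq p$. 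Because the factors attached to distinct primes commute (assertion 2 of \cite[Proposition 3.1]{Hy2}), these restrictions, together with multiplicativity, determine each map on all products.

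With this in hand the three identities are factor-by-factor applications of Proposition \ref{prop:equalitesmorphisms}. For (1),
\[
\hphi\bigl(\hs(\htZt(\widehat{A}))\bigr) = \hphi\Bigl(\prod_p s_p(\tZtp(A_p))\Bigr) = \prod_p \phi_p\bigl(s_p(\tZtp(A_p))\bigr) = \prod_p \tZtp(A_p) = \htZt(\widehat{A}),
\]
the middle step using multiplicativity and the restriction property, the third using $\phi_p \circ s_p = id$. Identity (2) is identical in form, invoking $\theta_p \circ s_p = s_p$ for $\htheta_p \circ \hs = \hs$ and $\phi_p \circ \theta_p = \phi_p$ for $\hphi \circ \htheta_p = \hphi$, while noting that $\htheta_p$ changes only the $p$-th factor. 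For (3), both composites send $\htH(\halpha)$ to $\theta_p(\tHp(\alpha_p))\,\theta_q(\tHq(\alpha_q))\prod_{r \neq p,q} T_{\mathcal{H}_r}(\alpha_r)$, since $\htheta_p$ acts only on the $p$-th factor and $\htheta_q$ only on the $q$-th; equality of the two orderings is precisely the commutativity of distinct-prime factors.

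The step I expect to require the most care is the justification of the componentwise manipulation, i.e.\ that each homomorphism distributes over the products $\prod_p$ and operates on each local factor independently. This rests on two facts already available: the commutativity of distinct-prime factors from \cite[Proposition 3.1]{Hy2}, which makes the ordered products well defined and forces the two composites in (3) to agree, and the finiteness of the products (all but finitely many $\alpha_q$, resp.\ $A_q$, being trivial). It is worth emphasizing that the noncommutativity inside a single $\rHp$ never interferes: in (3) the operators $\htheta_p$ and $\htheta_q$ act on different prime components, so no local multiplication is ever reordered. Once the restriction property is established, nothing beyond the local relations of Proposition \ref{prop:equalitesmorphisms} is needed.
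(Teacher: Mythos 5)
Your proposal is correct and follows exactly the route the paper intends: the paper's own proof is simply ``Obvious,'' stated after the remark that the proposition follows from Proposition \ref{prop:equalitesmorphisms}, and your factor-by-factor reduction to the local identities $\phi_p \circ s_p = id$, $\theta_p \circ s_p = s_p$, $\phi_p \circ \theta_p = \phi_p$, together with the commutation of distinct-prime factors, is precisely the argument being left implicit. Your explicit isolation of the restriction property is a reasonable way to make that ``obvious'' step rigorous.
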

\begin{proof}
It is an easy consequence of Proposition \ref{prop:equalitesmorphisms}.
\end{proof}
\medskip

From the proposition above, it is evident that the Hecke ring $\hrH$ is a ring over $\hrZt$ by $\hs$, and that $\htheta_p$ is a ring homomorphism over $\hrZt$ for each $p$.
Set $\hbtheta = (\htheta_p)_p$, and let $\hrZt[\hbtheta]$ be the polynomial ring over $\hrZt$ in infinitely many variables $\hbtheta$.
Then, the Hecke ring $\hrH$ is an $\hrZt[\hbtheta]$-module.

Now, the following theorem is proven, analogous to Theorem \ref{thm:globalratinality}:

\newcommand{\pHq}{P_{\Hei_q}}

\begin{thm}\label{thm:globalequality}
$\widehat{I}_2 (\hbtheta;s) $ is defined as the infinite product \[\prod_p g_{2,p}(\htheta_p;p^{1-2s}).\]
Then, the following is obtained:
\[\widehat{I}_2 (\hbtheta;s) \cdot \dHs = \widehat{I}_2 (\hbtheta;s) \cdot \hdHs = 1.\]
\end{thm}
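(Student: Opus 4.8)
The plan is to apply the module operator $\widehat{I}_2(\hbtheta;s) = \prod_p g_{2,p}(\htheta_p; p^{1-2s})$ to the Euler product for $\hdHs$ and to show that it strips off the Euler factors one prime at a time, invoking the local rationality of Theorem \ref{thm:local:Heisenbergex} at each prime. By Theorem \ref{thm:eulerproducthei} we may write $\hdHs = \prod_q \pHq(q^{-s})$, a formal Dirichlet series whose coefficients are the finite products $\prod_q \tHq(\alpha_q) = \htH(\halpha)$. The whole argument takes place inside $\hrH$ regarded as a module over $\hrZt[\hbtheta]$, and the point is that this module structure, though not an algebra structure, is enough.

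First I would establish the localization property of $\htheta_p$: from Definition \ref{df:extenstionmorphisms} and the additivity of $\htheta_p$,
\[
\htheta_p\!\left(\prod_q \pHq(q^{-s})\right) = \theta_p\!\left(\pHp(p^{-s})\right)\cdot \prod_{q\neq p}\pHq(q^{-s}),
\]
since $\htheta_p$ replaces only the $p$-th tensor factor $\tHp(\alpha_p)$ by $\theta_p(\tHp(\alpha_p))$ and fixes every other factor. I would also record that the scalars $\hs(T_p(1,p))$ and $\hs(T_p(p,p))$ are concentrated at the prime $p$, so that multiplication by them in the $\hrZt$-module structure reduces to the local maps $s_p$ on the $p$-th Euler factor and commutes with all $\pHq(q^{-s})$ for $q\neq p$.

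The central step is to prove, for each fixed prime $p$,
\[
g_{2,p}(\htheta_p; p^{1-2s})\cdot \prod_q \pHq(q^{-s}) = \prod_{q\neq p}\pHq(q^{-s}).
\]
I would expand the left-hand side into its three module terms $\htheta_p^2$, $-T_p(1,p)\,p^{1-2s}\,\htheta_p$ and $p\,T_p(p,p)\,p^{2(1-2s)}$, apply the localization property to each, and factor $\prod_{q\neq p}\pHq(q^{-s})$ out to the right. The remaining bracketed coefficient is exactly $g_{2,p}(\theta_p; p^{1-2s})\pHp(p^{-s})$, which equals $1$ by Theorem \ref{thm:local:Heisenbergex} with $X=p^{-s}$, so that $pX^2 = p^{1-2s}$. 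This is precisely where the module/algebra distinction is felt: one cannot treat $g_{2,p}(\htheta_p;\cdot)$ as a ring element to be multiplied into the product, and the reduction succeeds only because $\htheta_p$ and the coefficient scalars are supported at $p$ by construction; I expect this to be the main obstacle of the proof.

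Finally I would iterate the single-prime reduction. The operators $\htheta_p$ commute by Proposition \ref{prop:equalitesglobalmorphisms}(3), so the factors may be applied in any order and successively delete the corresponding Euler factors. For a fixed index $n$ the coefficient $\htHn$ vanishes unless $n$ is a perfect square, and applying $g_{2,p}(\htheta_p; p^{1-2s})$ for a prime $p\nmid n$ leaves the $n$-th coefficient unchanged, because the two shift terms vanish and $\htheta_p$ acts as the identity on components unramified at $p$; hence only the finitely many primes dividing $n$ affect it. Deleting all Euler factors thus leaves the constant Dirichlet series $1$, yielding $\widehat{I}_2(\hbtheta;s)\cdot\hdHs = 1$.
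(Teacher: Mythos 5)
Your proposal is correct and follows essentially the same route as the paper's proof: write $\hdHs$ as the Euler product, use the fact that $\htheta_p$ and the scalars $T_p(1,p)$, $T_p(p,p)$ act only on the $p$-th Euler factor and commute with the factors at $q\neq p$, reduce $g_{2,p}(\htheta_p;p^{1-2s})\cdot\hdHs$ to the local identity $g_{2,p}(\theta_p;p^{1-2s})\pHp(p^{-s})=1$ of Theorem \ref{thm:local:Heisenbergex}, and iterate over all primes. Your additional remarks on the coefficient-wise justification of the infinite iteration are a harmless elaboration of what the paper leaves implicit.
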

\begin{proof}
Theorem \ref{thm:eulerproducthei} implies that
\[\hdHs = \prod_p\pHp(p^{-s}).\]
Let us fix a prime number $p$. Then, the following is obtained:
\[\hdHs^{\htheta_p} = \pHp(p^{-s})^{\theta_p} \cdot \prod_{q\not=p}\pHq(q^{-s}).\]
In addition, $\rHp$ and $R_{\mathcal{H}_q}$ commute with each other in $\hrH$ for any prime number $q$ different from $p$. Hence, for each element $\mathfrak{a}_p$ of $\rZtp$, we have
\[\mathfrak{a}_p\cdot (\hdHs) = \left(\mathfrak{a}_p\cdot \pHp(p^{-s})\right) \cdot \prod_{q\not=p}\pHq(q^{-s}).\]
Therefore,
\[\widehat{I}_2 (\hbtheta;s) \cdot \hdHs = \prod_{p}\left(g_{2,p}(\theta_p;p^{1-2s})\cdot \pHp(p^{-s})\right).\]
Subsequently, Theorem \ref{thm:local:Heisenbergex} implies that the right-hand side of the equality above is $1$, which completes the proof. 
\end{proof}
\medskip

In the remainder of this section, it is shown that Theorem \ref{thm:globalequality} recovers Shimura's Theorem \ref{thm:globalratinality}.
Let $\hpsi:\hrZt[\hbtheta] \to \hrZt$ be the ring homomorphism over $\hrZt$ satisfying $(\htheta_p)^{\hpsi} = 1$ for all $p$.
Then $\hphi$ and $\hpsi$ are compatible; that is, 
\[(\mathfrak{a} \cdot \mathfrak{A})^{\hphi} = \mathfrak{a}^{\hpsi} \cdot \mathfrak{A}^{\hphi} \quad \text{for any $\mathfrak{a} \in \hrZt[\hbtheta]$ and any $\mathfrak{A} \in \hrH$},\]
which follows from Proposition \ref{prop:equalitesglobalmorphisms}. Consequently, the following proposition is proven:
\begin{prop}\label{prop:exampleofphipsi}
The following identities hold:
\begin{enumerate}\setlength{\itemsep}{0mm}
\item $\widehat{I}_2 (\hbtheta;s)^{\hpsi} = I_2(2s-1)$,
\item $\hdHs^{\hphi} = \hdZt(2s-1)$.
\end{enumerate}
\end{prop}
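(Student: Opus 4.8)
The plan is to verify the two equalities by unwinding the definitions of the maps $\hphi$ and $\hpsi$ and combining them with the already-established Euler product formula for $\hdHs$ and the definitions of $\widehat{I}_2$, $g_{2,p}$, and $f_{2,p}$. Both assertions are fundamentally "apply a multiplicative map to a product and compute factor by factor," so the whole proof should be organized around the compatibility relation stated just before the proposition, together with Theorem \ref{thm:eulerproducthei}.

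For assertion (1), I would first recall that $\widehat{I}_2(\hbtheta;s) = \prod_p g_{2,p}(\htheta_p;p^{1-2s})$ and that $\hpsi$ is the $\hrZt$-algebra homomorphism sending every $\htheta_p$ to $1$. Since $\hpsi$ is a ring homomorphism, it commutes with the (formal) infinite product, so $\hpsi(\widehat{I}_2(\hbtheta;s)) = \prod_p \hpsi\!\left(g_{2,p}(\htheta_p;p^{1-2s})\right)$. Evaluating one factor, $g_{2,p}(\htheta_p;X) = \htheta_p^2 - T_p(1,p)\,\htheta_p X + p\,T_p(p,p)X^2$, and substituting $\htheta_p \mapsto 1$ collapses this to $1 - T_p(1,p)X + p\,T_p(p,p)X^2$, which is exactly $f_{2,p}(X)$ under the identifications $T_p(1,p) = T_{2,p}^{(1)}$, $T_p(p,p) = T_{2,p}^{(2)}$ (here I use $\hpsi(T_p(\cdot,\cdot)) = T_p(\cdot,\cdot)$, since these live in the coefficient ring $\hrZt$ that $\hpsi$ fixes). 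With $X = p^{1-2s}$ we get $f_{2,p}(p^{1-2s}) = f_{2,p}(p^{-(2s-1)})$, and taking the product over $p$ yields $\prod_p f_{2,p}(p^{-(2s-1)}) = I_2(2s-1)$ by the definition of $I_r(s)$ in Theorem \ref{thm:globalratinality}.

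For assertion (2), I would start from $\hdHs = \prod_p \pHp(p^{-s})$ (Theorem \ref{thm:eulerproducthei}) and apply $\hphi$, using that $\hphi$ is a ring homomorphism to pass it through the product: $\hphi(\hdHs) = \prod_p \hphi\!\left(\pHp(p^{-s})\right)$. The key local computation is to identify $\hphi$ restricted to the $p$-factor with the local map $\phi_p$, so that the term $\pHp(p^{-s}) = \sum_k \tHpk (p^{-s})^k$ is sent coefficientwise; by Proposition \ref{prop:relationT2tHpk} and the corollary $D_{2,2}(X^2) = \pHp(X)$, the nonzero coefficients occur in even degree and match the $\Z^2$-side Hecke operators after applying $\phi_p$. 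The upshot should be that $\hphi(\pHp(p^{-s})) = \pZtp(p^{-(2s-1)})$, i.e. the local Heisenberg series maps to the local $GL_2$ Hecke series with the variable reparametrized by $s \mapsto 2s-1$ (the shift by $1$ coming from the factor $p$ and the squaring from $pX^2$ appearing in Theorem \ref{thm:local:Heisenbergex}). Taking the product over $p$ and invoking the Euler product formula of Theorem \ref{thm:eulerproductZr} for $\hdZt$ then gives $\prod_p \pZtp(p^{-(2s-1)}) = \hdZt(2s-1)$.

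The main obstacle is assertion (2): one must pin down precisely how $\hphi$ acts on the formal series $\pHp(X)$ and justify the reparametrization $s \mapsto 2s-1$. This requires carefully tracking the index shift built into Proposition \ref{prop:relationT2tHpk} (namely $\tHp(p^{2k}) = T_2(p^k)$ and $\tHp(p^{2k+1})=0$), so that the exponent $p^{-s}$ raised to the relevant power becomes $p^{-(2s-1)}$ on the $GL_2$ side, and then checking that $\phi_p$ sends the surviving even-degree coefficients to the correct generators of $\rZtp$ so that $\hphi(\pHp(p^{-s}))$ is literally $\pZtp(p^{-(2s-1)})$. Assertion (1), by contrast, is a direct substitution and should follow immediately once the factorwise computation is set up. As a consistency check, applying $\hphi$ to the equality $\widehat{I}_2(\hbtheta;s)\cdot\hdHs = 1$ of Theorem \ref{thm:globalequality} and using the compatibility of $\hphi$ with $\hpsi$ should reproduce exactly $I_2(2s-1)\,\hdZt(2s-1) = 1$, which is Theorem \ref{thm:globalratinality} evaluated at $2s-1$; this confirms that the two assertions are the right local-to-global statements.
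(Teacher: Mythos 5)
Your approach is the same as the paper's: assertion (1) by substituting $\htheta_p \mapsto 1$ factorwise (the paper phrases this as $\hpsi(g_{2,p}(\theta_p;pX^2)) = g_{2,p}(1;pX^2) = f_{2,p}(pX^2)$), and assertion (2) by pushing $\hphi$ through the Euler product of Theorem \ref{thm:eulerproducthei} and computing each local factor. Part (1) of your argument is complete and correct.

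For part (2), however, you have correctly located the crux but not resolved it. Knowing that $\tHp(p^{2k+1})=0$ and that the even-degree coefficients ``match the $\Z^2$-side Hecke operators'' is not enough: the precise identity needed is
\[
\phi_p\bigl(\tHp(p^{2k})\bigr) = p^{k}\,\tZtp(p^{k}),
\]
which the paper obtains from \cite[Theorem 7.5]{Hy} together with Proposition \ref{prop:relationT2tHpk}. The extra factor $p^{k}$ is exactly what turns $\sum_k \phi_p(\tHp(p^{2k}))\,p^{-2ks}$ into $\sum_k \tZtp(p^k)\,p^{-k(2s-1)} = \pZtp(p^{-(2s-1)})$; without it one would land on $\hdZt(2s)$ instead of $\hdZt(2s-1)$. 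Your attribution of the shift to ``the factor $p$ and the squaring from $pX^2$ in Theorem \ref{thm:local:Heisenbergex}'' is not a justification --- that theorem concerns the action of $g_{2,p}$ on $\pHp$, not the value of $\phi_p$ on the coefficients $\tHp(p^{2k})$. So the one nontrivial ingredient of the proposition is left as an unverified claim; to close the gap you must either cite or reprove the local formula displayed above.
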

\begin{proof}
It is evident that $g_{2,p}(\theta_p;pX^2)^{\hpsi}= g_{2,p}(1;pX^2) = f_{2,p}(pX^2)$, which implies the first equality.
The second one follows from Theorems \ref{thm:local:HeiZpt}, \ref{thm:eulerproducthei} and \ref{thm:eulerproductZr}.
\end{proof}
\medskip

Therefore, it is concluded that Theorem \ref{thm:globalequality} recovers Theorem \ref{thm:globalratinality} when $r=2$:
The map $\hphi$ is applied to the equality in Theorem \ref{thm:globalequality}. 
Then, Proposition \ref{prop:exampleofphipsi} and the compatibility of $\hphi$ and $\hpsi$ imply that
\[
I_2(2s-1) \hdZt(2s-1) = 1.
\]

\begin{rem}\label{rem:noncommutativity}
As shown in Theorem 7.3 of \cite{Hy}, the coefficients of the Hecke series $D_{2,2}(X)$ are not necessarily commutative. Thus, neither are those of $\pHp(X)$ and $\dHs$.
\end{rem}

\section{Zeta functions of algebras}\label{section:zetafunc}
Let us return to the case where $L$ is as in Section \ref{section:intro}.
In this section, our series $\dls$ and $\hdls$ are related to the isomorphic zeta function $\zetaiLs$  and pro-isomorphic zeta function $\zetahLs$ of $L$, respectively.

Denote by $\tSh_n(L)$ the family of $\hZ$-subalgebras $\mathcal{M}$ of $\hl$ of index $n$ such that there is an isomorphism $\mathcal{M} \cong \hl$ of algebras over $\hZ$.
Then, the maps $\gaml \bs \Aln \to \Si_n(L)$ given by $\gaml\alpha \mapsto L^{\alpha}$ and $\hgaml \bs \hAln \to \tSh_n(L)$ given by $\hgaml\halpha \mapsto \hl^{\halpha}$ are both bijective. 
Since $L$ is free of finite rank as an abelian group, 
one verifies in a similar way as in the proof of Proposition 1.2 of \cite{GSS} that the maps $\Sh_n(L) \to \tSh_n(L)$ 
defined by $M \mapsto M\otimes \hZ$ and $\tSh_n(L) \to \Sh_n(L)$ defined by  $\mathcal{M} \mapsto \mathcal{M} \cap L$ are inverse to each other, 
in particular, one has $\#\tSh_n(L) = \#\Sh_n(L) = \anhL$.
Hence, it follows that, for each $n$, 
\[\tln^{\degr{\rl}} = \aniL , \quad \htln^{\degr{\hrl}} = \anhL.\]
Thus, $\dls$ and $\hdls$ are related to $\zetaiLs$ and $\zetahLs$ as follows:
\[\dls^{\degr{\rl}} = \zetaiLs, \quad \hdls^{\degr{\hrl}} = \zetahLs.\]

By definition, we have $\degr{\hrl}|_{\rlp} = \degr{\rlp}$. Moreover, $\degr{\hrl}$ and $\degr{\rl}$ are related as follows:
\begin{prop}
If the map $\etadl$ is bijective, then we have
	$\degr{\hrl} = \degr{\rl} \circ \etaul$,
that is, 
	$\degr{\rl} |_{\hrl} = \degr{\hrl}$.
\end{prop}
\begin{proof}
Since $\etadl$ is bijective, so is 
\[
	\etadl |_{\gaml \bs \etal^{-1}(\hgaml \halpha \hgaml)} : \gaml \bs \etal^{-1}(\hgaml \halpha \hgaml) \to \hgaml \bs \hgaml 	\halpha \hgaml.
\]
Thus, we have
$
	\# \hgaml \bs \hgaml \halpha \hgaml = \# \gaml \bs \etal^{-1}(\hgaml \halpha \hgaml),
$
which completes the proof.
\end{proof}
\medskip

The above proposition implies the following corollary:
\begin{cor}\label{cor:equality:iso-proisoZeta}
	In order that $\zetahLs = \zetaiLs$, it is necessary and sufficient that $\etadl$ is bijective.
\end{cor}
\begin{proof}
Sufficiency is an easy consequence of the above proposition and the equality $\dls = \hdls$. 
Let us prove necessity. Since $\etadl$ is injective, so is $\etadl |_{\gaml \bs \Aln}$ for each $n$.
If $\zetahLs = \zetaiLs$, then we have $\#\gaml \bs \Aln = \#\hgaml \bs \hAln$ for each $n$, and thus, $\etadl$ is bijective, 
\end{proof}
\medskip

Next, the case $L = \Z^r$ is considered. We make use of the following identity shown by Tamagawa \cite{T}:
\begin{thm}[{\cite[Corollary]{T}}]\label{thm:localratinality:deg}
Let $f_{r,p}(X)$ be as in Theorem \ref{thm:rationality}. 
Then, the following identity holds:
\[
	f_{r,p}(X)^{\degr{\hrZr}} = \prod_{0 \leq k \leq r-1}(1 - p^{k}X).
\]
\end{thm}

This theorem derives the following identity:
\begin{cor}\label{cor:globalratinality:deg}
Let $I_r(s)$ be as in Theorem \ref{thm:globalratinality}.
Then, the following identity holds:
\[
	I_r(s)^{\degr{\rZr}} = I_r(s)^{\degr{\hrZr}} =\prod_{0 \leq k \leq r-1} \zeta(s-k)^{-1},
\]
where $\zeta(s)$ is the Riemann zeta function.
\end{cor}
\begin{proof}
This follows from the multiplicativity of $\degr{\hrZr}$ and the above theorem. 
\end{proof}
\medskip

Theorem \ref{thm:globalratinality} and Corollary \ref{cor:globalratinality:deg} recover the explicit formulae for $\zetah{\Z^r}(s)$ and $\zetai{\Z^r}(s)$ proved in \cite{GSS}.

\begin{cor}[{\cite[Proposition 1.1]{GSS}}]\label{cor:zeta:Zr}
The following identity holds:
\[\zetai{\Z^r}(s) = \zetah{\Z^r}(s) = \prod_{0 \leq k \leq r-1} \zeta(s-k).\]
\end{cor}
\begin{proof}
It is immediately verified by applying the maps $\degr{\rZr}$ and $\degr{\hrZr}$ to the identity of Theorem \ref{thm:globalratinality}.
\end{proof}
\medskip

Next, the case $L = \Hei$ is investigated.
	Let $\psi:\Z[\hbtheta] \to \Z$ be the ring homomorphism satisfying $(\htheta_p)^{\psi} = 1$ for all $p$.
	Then, the coefficient-wise images of $g_{2,p}(\htheta_p;X)$ and $\widehat{I}_2 (\hbtheta;s)$
	under the homomorphism $(\degr{\hrH} \circ \hs) \otimes \psi : \hrZt [\hbtheta] \to \Z$ are as follows:
	
\begin{prop}\label{prop:hei:deg}
	The following identities hold:
	\begin{align*} 
		 g_{2,p}(\htheta_p;X)^{(\degr{\hrH} \circ \hs) \otimes \psi} &= (1 - p^2X^2)(1 - p^3X^2), \\
		\widehat{I}_2 (\hbtheta;s)^{(\degr{\hrH} \circ \hs) \otimes \psi} &= \zeta(2s-2)^{-1}\zeta(2s-3)^{-1}.
	\end{align*}
\end{prop}
\begin{proof}
	By the identities (1)-(a) and (2)-(a) of \cite[Proposition 5.4]{Hy}, we have
	\[
		T_p(1,p)^{\degr{\hrH} \circ \hs} =p^2(1+p^{-1}), \quad T_p(p,p)^{\degr{\hrH} \circ \hs} = p^2,
	\] 
	which implies the first identity. The second is easily derived by the first one.
\end{proof}

\begin{rem}
	$\degr{\hrH} \circ \hs$ and $\degr{\hrZt}$ are slightly different: 
	For each $A \in \widehat{\Delta}_{\Z^2}$, it follows from \cite[Proposition 5.4]{Hy} and \cite[Theorem 3.24]{S2} that
	\[\htZt(\widehat{A})^{\degr{\hrH} \circ \hs} = [\hZ^2:(\hZ^2)^A]\cdot\htZt(\widehat{A})^{\degr{\hrZt}}.\] 
\end{rem}
\smallskip

Since $\degr{\hrH} \circ \htheta_p = \degr{\hrH}$, the ring homomorphism 
	$(\degr{\hrH} \circ \hs\ ) \otimes \psi$
	and $\degr{\hrH}$ are compatible.
Therefore, the explicit formulae for $\zetaiHs$ and $\zetahHs$ shown in \cite{GSS} and $\cite{BGS}$ are recovered.
\begin{cor}[{\cite[Theorem 7.6]{GSS}}, {\cite[Theorem 5.1]{BGS}}]
	The following identity holds:
	\[\zetaiHs = \zetahHs = \zeta(2s-2)\zeta(2s-3).\]
\end{cor}
\begin{proof}
	It is an easy consequence of Proposition \ref{prop:hei:deg} and Theorem \ref{thm:globalequality}.
\end{proof}
%

\section{Isomorphic zeta functions}\label{section:isomZeta}
In this section, we observe the isomorphic zeta functions in the cases of the free nilpotent Lie algebras and class-2 nilpotent Lie algebras.
%

%
\subsection{Case of the free nilpotent Lie algebras}\label{subsection:freenilp}
\newcommand{\ab}{\mathrm{ab}}
Let $\gFr = \gFr_{c,g}$ (resp. $\Fr=\Fr_{c,g}$) be the free nilpotent group (resp. Lie algebra) of class $c$ on $g$-generators. For a nilpotent Lie algebra $L$, denote by $\gamma_i(L)$ the $i$-th term of its lower central series,
and set $L^{\it ab} = L/[L,L]$. 
In this section, we describe the explicit formulae for the zeta functions of $\gFr$ and $\Fr$.
As mentioned in the introduction, they are essentially due to \cite{GSS}.
However, other literature does not deal with isomorphic zeta functions of algebras, and it is necessary to give a detailed proof of the explicit formula for $\zetai{\Fr}(s)$.

In the case $c=1$, we have $\gFr_{c,g} = \Fr_{c,g} = \Z^g$. Hence, the explicit formulae were obtained in Corollary \ref{cor:zeta:Zr}.
In the following, suppose that $c \geq 2$. 
The explicit formula for $\zetah{\Fr}(s)$ was proved in $\cite{BGS}$, and those for $\zetai{\gFr}(s)$ and $\zetah{\gFr}(s)$ were established in \cite{GSS}:
\begin{thm}[{\cite[Theorem 7.6]{GSS}, \cite[Theorem 5.1]{BGS}}]\label{thm:zeta:free:GSS-BGS}
Let $m_i$ be the rank of $\gamma_i(\Fr)/\gamma_{i+1}(\Fr)$ for each $ 1 \leq i \leq c$.
Define $\alpha = \frac{1}{g}\sum_{i=1}^c i m_i$, and $\beta = \sum_{i=2}^c i m_i$. Then, we have
\[\zetai{\gFr}(s) = \zetah{\gFr}(s) =  \zetah{\Fr}(s)  = \prod_{j=0}^{g-1}\zeta(\alpha s - \beta -j).\]
\end{thm}
\medskip
\begin{rem}
The following formula for $m_i$ was established in \cite[Satz 3]{Wit}:
\[m_i = \frac{1}{i} \sum_{j | i} \mu(j)g^{i/j},\]
where $\mu$ is the M\"obius function.
\end{rem}
\medskip

Therefore,  it remains to consider $\zetai{\Fr}(s)$. We prepare the following lemma which is a Lie algebra analogue of 
\cite[Theorem 1.8]{War}. The proof imitates the one of \cite[Chapter 1, Exercise 7]{Seg}:
\begin{lemma}\label{lemma:subalgeqalg}
Let $L$ be a nilpotent Lie algebra, and let $M$ be a subalgebra of $L$. If $L = M + [L,L]$, then $L = M$.
\end{lemma}
\begin{proof}
For each $i \geq 0$, denote by $Z_i$ the $i$-th upper central series of $L$.
Assume that $M$ was a proper subalgebra of $L$. 
Then, there exists $i > 0$ such that $Z_i + M = L$ and $Z_{i-1} + M \subsetneq L$.
Since $[L,L] = [Z_i + M,Z_i + M] \subset Z_{i-1} + M$, we have $ L = M + [L,L] \subset Z_{i-1}+M$, which is a contradiction.
\end{proof}
\medskip

Now, we prove the explicit formula for $\zetai{\Fr}(s)$.
\begin{prop}\label{prop:freenilp}
Let us keep the notation of Theorem \ref{thm:zeta:free:GSS-BGS}. Then, the following identity holds:
\[\zetai{\Fr}(s) = \zetah{\Fr}(s) = \prod_{j=0}^{g-1}\zeta(\alpha s - \beta -j).\]
\end{prop}
\begin{proof}
Theorem \ref{thm:zeta:free:GSS-BGS} reduces us to proving that $\zetai{\Fr}(s) = \zetah{\Fr}(s)$.
Let $M$ be an element of $\Sh_n(\Fr)$ with a positive integer $n$. It is sufficient to show that $M$ and $\Fr$ are isomorphic as Lie algebras.
Clearly, $\Fr^\ab\otimes \hZ$ and $M^\ab\otimes \hZ$ are isomorphic to $(\widehat{\Fr})^\ab$ and $(\widehat{M})^\ab$ as $\hZ$-modules, respectively.
By assumption, $(\widehat{\Fr})^\ab$ and $(\widehat{M})^\ab$ are isomorphic as $\hZ$-modules. 
Hence, $M^\ab\otimes \hZ$ is isomorphic to $\Fr^\ab\otimes \hZ = \hZ^g$ as $\hZ$-modules.
Since $M^\ab$ is finitely generated abelian group, it follows from the fundamental theorem of finitely generated abelian groups that $M^\ab$ is isomorphic to $\Z^g$.
Hence, we can take elements $x_1,...,x_g$ of $M$ such that their images under the canonical projection $M \to M^\ab$ form a basis of $M^\ab$, and it follows from Lemma \ref{lemma:subalgeqalg} that $x_1,...,x_g$ generate $M$.
By the universal property of $\Fr$, there exists a surjective homomorphism $\varphi: \Fr \to M$, and the induced map $\varphi \otimes id_{\hZ}: \widehat{\Fr} \to \widehat{M}$ is also surjective.
Since $\widehat{\Fr}$ and $\widehat{M}$ are isomorphic as algebras over $\hZ$,
the ranks of them over $\hZ$ are the same. Therefore, $\varphi \otimes id_{\hZ}$ is an isomorphism. 
The faithful flatness of $\hZ$ over $\Z$ implies the bijectivity of $\varphi$, which completes the proof.
\end{proof}

\newcommand{\SnG}{\mathcal{S}_n(G)}
\newcommand{\SnL}{\mathcal{S}_n(L)}

\subsection{Case of class-2 nilpotent Lie algebras}\label{subsection:classTwo}
Suppose that $L$ is a nilpotent Lie algebra of class $2$. 
By the class-two Lie correspondence of \cite[Section 3.1]{BKO}, there exists a unique torsion-free finitely generated nilpotent group $G$ of class $2$ up to isomorphism such that 
$L$ is isomorphic to $L(G) = (G/Z) \oplus Z$ as Lie algebras, where $Z$ is the center of $G$. Hence, we may identify $L$ with $L(G)$, and $Z$ is regarded as the center of $L$ by the map $z \in Z \mapsto (1,z) \in L$.
In this subsection, the equality $\zeta_G^i(s) = \zeta_L^i(s)$ is verified. 
To show this, Proposition \ref{prop:correspondence} below is essential. 
The proposition is mentioned in many references without proof, for example, \cite[Section 4]{GSS},\cite[Section 1.2.2]{Sau}, and \cite[Section 2.1]{BGS}. 
Although a proof is proposed in \cite[Proposition 3.1]{BKO}, it is incorrect (cf. Remark \ref{rem:BKO}). Therefore, it would be worthwhile to give a precise proof in this study.
\begin{prop}\label{prop:correspondence}
Let $n$ be a positive integer. Denote by $\SnG$ (resp. $\SnL$) the set of subgroups (resp.  subalgebras) of $G$ (resp.  $L$) of index $n$. Then, there exists a bijection $f_n: \SnG \to \SnL$ such that, for each $H \in \SnG$, its image is isomorphic to $L(H)$ as Lie algebras. In particular, we have $\#\SnG = \#\SnL$.
\end{prop}
Before proving the proposition, we deduce the following corollary which is our purpose of this subsection:
\begin{cor}\label{cor:isomZeta}
The equalities $\zetai{G}(s) = \zetai{L}(s)$ and $\zetah{G}(s) = \zetah{L}(s)$ hold.
\end{cor}
\begin{proof}
Let $H \in \SnG $, and put $M = f_n(H)$. Then, $M \cong L(H)$.
By the class-two Lie correspondence of \cite[Section 3.1]{BKO}, we see that $H \in \Si_n(G)$ if and only if $M \in \Si_n(L)$. Thus, we have $\#\Si_n(G) = \#\Si_n(L)$.
Since $L(\widehat{G}) = L \otimes \hZ$ and $L(\widehat{H}) = L(H) \otimes \hZ$, it follows from the local class-two Lie correspondence of \cite[Section 3.1]{BKO} that $H \in \Sh_n(G)$ if and only if $M \in \Sh_n(L)$. Thus, we have $\#\Sh_n(G) = \#\Sh_n(L)$, which completes the proof.
\end{proof}
\smallskip

\begin{rem}
It is mentioned without proof  in \cite[Section 1.1]{BGS} and \cite[Section 1.2.2]{Sau}
that the equality $\zetah{G}(s) = \zetah{L}(s)$ holds.
\end{rem}
\medskip

\newcommand{\SGAB}{\mathcal{S}_G(A,B)}
\newcommand{\SLAB}{\mathcal{S}_L(A,B)}

\newcommand{\tBG}{\tilde{B}_G}
\newcommand{\tBL}{\tilde{B}_L}
\newcommand{\bpiG}{\bar{\pi}_G}
\newcommand{\bpiL}{\bar{\pi}_L}

\newcommand{\bSGAB}{\overline{{\mathcal{S}}_G(A,B)}}
\newcommand{\bSLAB}{\overline{{\mathcal{S}}_L(A,B)}}

\newcommand{\SG}{\SGAB}
\newcommand{\SL}{\SLAB}

\newcommand{\pL}{\Pi_L}
\newcommand{\pG}{\Pi_G}

\newcommand{\isom}{\lambda}
\newcommand{\pisom}{\Lambda}

\newcommand{\tb}{\tilde{b}}

\newcommand{\Kill}{\varphi}

In order to prove Proposition \ref{prop:correspondence}, we introduce some notation. Let $\pi_G$ (resp. $\pi_L$) be the canonical projection $G \to G/Z$ (resp. $L \to G/Z$). 
Denote by $\Kill$ the map
$G/Z \times G/Z \to Z$ given by $(xZ, yZ) \mapsto [x,y]=x^{-1}y^{-1}xy$.
Since $G$ is of nilpotent class $2$, its derived group $[G,G]$ is contained in $Z$. 
Hence, $G/Z$ is abelian group, and we have $[xy,z] = [x,z][y,z], [x,yz] = [x,y][x,z]$ for any $x,y,z \in G$.
This implies that $\Kill$ is a $\Z$-bilinear form.

Further, let $A$ and $B$ be finite-index subgroups of $Z$ and $G/Z$, respectively. 
Denote by $\SGAB$ the set of subgroups $H$ of $G$ such that $H \cap Z = A$ and $\pi_G(H) = B$. 
Similarly, denote by $\SLAB$ the set of subalgebras $M$ of $L$ such that $M \cap Z = A$ and $\pi_L(M) = B$.
For any $H \in \SGAB$ and $M \in \SLAB$, we have $[G:H] = [L:M] = [Z:A][G/Z:B]$.
Hence, to prove Proposition \ref{prop:correspondence}, it is sufficient to show the following lemma:
\begin{lemma}\label{lemma:bij-SG-SL}
There exists a bijection $f_{A,B}: \SGAB \to \SLAB$ such that, for each $H \in \SGAB$, its image is isomorphic to $L(H)$ as Lie algebras.
\end{lemma}

To prove this lemma, we need to study $\SGAB$ and $\SLAB$.
First, the following three lemmas are shown. 
Although they hold in general for arbitrary torsion-free nilpotent groups (cf. Remark \ref{rem:arbitrarycase}), we provide a direct proof here for self-containedness:
\begin{lemma}\label{lemma:torsionfree:bilinear}
Let $x,y \in G$, and let $n$ be a positive integer. If $[x,y^n] =1$, then $[x,y]=1$.
\end{lemma}
\begin{proof}
Since $\varphi$ is bilinear, we have $1 = [x,y^n] = [x,y]^n$.
Since $G$ is torsion-free, we have $[x,y]=1$.
\end{proof}

\begin{lemma}\label{lemma:G-Z:torsionfree}
$G/Z$ is torsion-free.
\end{lemma}
\begin{proof}
Let $x,y \in G$, and suppose that $y^n \in Z$ for some $n>0$. 
By Lemma \ref{lemma:torsionfree:bilinear}, we have $[x,y]=1$, which derives that $y \in Z$. Thus, $G/Z$ is torsion-free.
\end{proof}

\begin{lemma}\label{lemma:ZH-A}
For each finite-index subgroup $H$ of $G$, the center $Z_H$ of $H$ equals $Z \cap H$.
\end{lemma}
\begin{proof}
It is immediately verified that $Z \cap H \subset Z_H$. Let $x$ be an element of $Z_H$, and let $y$ be an element of $G$. Since $H$ is of finite index, there exists a positive integer $n$ such that $y^n \in H$. 
Hence, we have $[x, y^n]= 1$, which implies $[x,y]=1$ by Lemma \ref{lemma:torsionfree:bilinear}. 
Thus, we have $Z_H \subset Z \cap H$.
\end{proof}
\medskip

\begin{rem}\label{rem:arbitrarycase}
Lemmas \ref{lemma:torsionfree:bilinear}, \ref{lemma:G-Z:torsionfree}, and \ref{lemma:ZH-A} hold for an arbitrary torsion-free nilpotent group $G'$.
Let $x,y \in G'$, and let $n$ be a positive integer. 
Suppose that $[x,y^n] =1$. Then, we have $(x^{-1} y x)^n = y^n$.
According to a result of Chernikov (cf. \cite[Theorem 4.10]{War}), the map $w \in G' \mapsto w^n \in G'$ is injective, which implies that $[x,y]=1$.
Thus, Lemma \ref{lemma:torsionfree:bilinear} holds for $G'$.
By using this, Lemmas \ref{lemma:G-Z:torsionfree} and \ref{lemma:ZH-A} for $G'$ are verified.
\end{rem}
\medskip

\newcommand{\settb}{\mathfrak{b} }
\newcommand{\Hb}{ H^\settb}
\newcommand{\Mb}{ M^\settb }

By Lemma \ref{lemma:G-Z:torsionfree}, $G/Z$ is a free abelian group of finite rank.
Let $d$ denote this rank. We next relate $(Z/A)^d$ to $\SGAB$ and $\SLAB$. 
Fix a subset $\settb = \{\tb_i\}_{i=1}^d$ of $G$ such that its image under $\pi_G$ forms a basis of $B$.  Set $b_i = \pi_G(\tb_i)$ for each $i$.
For an element $\Xi = (\xi_i)_i$ of $(Z/A)^d$, take an element $(z_i)_i$ of $Z^d$ satisfying $\xi_i = z_iA$ for each $i$.
Further, define $\Hb(\Xi)$ (resp. $\Mb(\Xi)$) to be the subgroup (resp. subalgebra) of $G$ (resp. $L$) generated by $A$ and $\{\tb_iz_i\}_i$ (resp. $\{(b_i,z_i)\}_i$). Since $A \subset Z$, the group $\Hb(\Xi)$ and the Lie algebra $\Mb(\Xi)$ are independent of the choice of $(z_i)_i$. 

If $\Kill(B,B) \subset A$, then, $[\tb_i,\tb_j]$ and $[(b_i,z_i),(b_j,z_j)]$ are contained in $A$ for any $i$, $j$.
Since $B$ is a free abelian group, each element of $\Hb(\Xi)$ (resp. $\Mb(\Xi)$) can be written uniquely as
a product (resp. sum)
$a\cdot \prod_i(\tb_i z_i)^{n_i}$ (resp. $(1,a) + \sum_i n_i(b_i,z_i)$),
where $(n_i)_i \in \Z^d$ and $a \in A$.
Hence, $\Hb(\Xi) \in \SGAB$, and $\Mb(\Xi) \in \SLAB$.
Moreover, $B \oplus A$ is a subalgebra of $L$, and a unique isomorphism $B \oplus A \to \Mb(\Xi)$ of Lie algebras
is determined by the inclusion $A \subset \Mb(\Xi)$ and the additive map $B \to \Mb(\Xi)$ defined by $b_i \mapsto (b_i,z_i)$ for each $i$.
\begin{lemma}\label{lemma:H-M-Xi}
The following assertions hold:
\begin{enumerate}
\item The following three conditions are equivalent:
\begin{itemize}
\item $\SGAB \not = \emptyset$,
\item $\SLAB \not = \emptyset$,
\item $\Kill(B,B) \subset A$.
\end{itemize}
\item
Suppose that $\Kill(B,B) \subset A$, and let $\Xi$ be an element  of  $(Z/A)^d$. Then, $\Hb(\Xi) \in \SGAB$, and $\Mb(\Xi) \in \SLAB$. Moreover, $L( \Hb(\Xi) )$ and $\Mb(\Xi)$ are isomorphic to the subalgebra $B \oplus A$ of $L$ as Lie algebras.
\item If $\Kill(B,B) \subset A$, then the maps $\lambda_G^{\settb}:(Z/A)^d \to \SG$ given by $\Xi \mapsto \Hb(\Xi)$, and $\lambda_L^{\settb}:(Z/A)^d \to \SL$ given by $\Xi \mapsto \Mb(\Xi)$ are bijective. In particular, we have $\#\SG = \#\SL = [Z:A]^d$.
\end{enumerate}
\end{lemma}
\newcommand{\vone}{\mathbf{1}}

\begin{proof}
\ 

1. 
If there exists $H \in \SG$, then $H/A$ is isomorphic to the abelian group $B$ by the canonical projection $G/A \to G/Z$. Hence, $\Kill(B,B) = [H,H] \subset A$.
Similarly, we have $\Kill(B,B) \subset A$ if $\SL \not = \emptyset$.
Conversely, if $\Kill(B,B)  \subset A$, then we have $\Hb(\vone) \in \SG$, and $\Mb(\vone) \in \SL$, where $\vone$ is the identity element of $(Z/A)^d$.

2. It is sufficient to prove that $L(\Hb(\Xi)) = B \oplus A$, which is an easy consequence of Lemma \ref{lemma:ZH-A}.

3. Let $H \in \SG$. Then, $H/A$ is isomorphic to $B$ by the canonical projection $G/A \to G/Z$.
Hence, there exists a unique element $\Xi =(\xi_i)_i$ of $(Z/A)^d$ 
such that $\{(\tb_iA)\cdot \xi_i\}_i$ forms a basis of $H/A$, and we have $H = \Hb(\Xi)$. Therefore, the map $\lambda_G^{\settb}$ is bijective.
In a similar way, for each $M \in \SL$,  there exists a unique element $\Xi =(\xi_i)_i$ of $(Z/A)^d$ 
such that the subset$\{(b_i,\xi_i)\}_i$ of $(G/Z) \oplus (Z/A) = L/A$ forms a basis of $M/A$, and we have $M = \Mb(\Xi)$.
Thus,  $\lambda_L^{\settb}$ is also bijective.
\end{proof}
\medskip

Now, a proof of Lemma \ref{lemma:bij-SG-SL} is obtained:
\begin{proof}[Proof of Lemma \ref{lemma:bij-SG-SL}]
It follows from Lemma \ref{lemma:H-M-Xi} that $\SGAB = \SLAB = \emptyset$ if $\Kill(B,B) \not \subset A$.
If $\Kill(B,B) \subset A$, then $f_{A,B} = \lambda_L^{\settb}\circ (\lambda_G^{\settb})^{-1}$ has the desired property by Assertions 2 and 3 of Lemma \ref{lemma:H-M-Xi}.
\end{proof}

\newcommand{\sG}{\scalebox{0.5}{$G$}}
\newcommand{\sL}{\scalebox{0.5}{$L$}}

\newcommand{\xsG}{x_{\sG}}
\newcommand{\xsL}{x_{\sL}}
\newcommand{\ysG}{y_{\sG}}
\newcommand{\ysL}{y_{\sL}}
\newcommand{\zsG}{z_{\sG}}
\newcommand{\zsL}{z_{\sL}}

\newcommand{\StwoG}{\mathcal{S}_4(G)}
\newcommand{\StwoL}{\mathcal{S}_4(L)}

In the rest of this subsection, we give an example of 
Lemmas \ref{lemma:bij-SG-SL} and \ref{lemma:H-M-Xi}. Subsequently, using this, we remark on Proposition 3.1 of \cite{BKO}.
\begin{ex}
Consider the case where $G$ is the Heisenberg group $\gHei$ with a free generating set $\{\xsG,\ysG\}$. 
Set $\settb = \{\xsG\ysG, \ysG^2\}$, 
$B = \langle \pi_G(\xsG\ysG)$, 
$\pi_G(\ysG^2) \rangle$, and
$A = \langle \zsG^2 \rangle$,
 where $z_{\sG}  = [x_{\sG},y_{\sG}] = x_{\sG}^{-1}y_{\sG}^{-1}x_{\sG}y_{\sG}$.
Then, we have $[G/Z : B] = [Z:A] = 2$, and hence, $\SG \subset \StwoG$, $\SL \subset \StwoL$.
Since $\Kill(B,B) = A$, it follows from Lemma \ref{lemma:H-M-Xi} that $\#\SG = \#\SL = 4$.

Now, $L=L(G)$ is the Heisenberg Lie algebra $\Hei$, and generated by $\xsL = (\pi_G(\xsG), 1)$ and $\ysL = (\pi_G(\ysG), 1)$. Put $\zsL = (1, \zsG)$. Then, $[\xsL,\ysL] = \zsL$.
By Lemma \ref{lemma:H-M-Xi}, 
$f_{A,B} = \lambda_L^{\settb}\circ (\lambda_G^{\settb})^{-1}$ is a bijection $\SG \to \SL$, and
we have $f_{A,B}(\Hb(\vone)) = \Mb(\vone)$, where $\vone$ is the identity element of $(Z/A)^2$. 
Moreover,
\begin{align*}
\Mb(\vone)&= \Z(\xsL + \ysL) + 2\Z\ysL + 2\Z\zsL \\
		& = \{ k\xsL + l \ysL + m\zsL \ |\ (k,l) \in \Z(1,1) + \Z(0,2),\ m \in 2\Z \},\\
\Hb(\vone) &= \{(\xsG\ysG)^k \ysG^{2l} \zsG^{2m} \ | \ k,l,m \in \Z\}. 
\end{align*}
\end{ex}

\begin{rem}\label{rem:BKO}
We keep the notation of the above example.
Proposition 3.1 of \cite{BKO} claims that, for each $n$, a one-to-one correspondence between $\SnG$ and $\SnL$ is induced by the bijection $f':G \to L$ defined by 
\[
f'(\xsG^k \ysG^l \zsG^m) =  k x_{\sL} + l y_{\sL} + mz_{\sL}
\quad 
\text{for $k,l,m \in \Z$}.
\]
However, it is not true. Indeed, $\Mb(\vone)$ corresponds to $H' = f'^{-1}(\Mb(\vone)) = \{ \xsG^k \ysG^l \zsG^m \ |\ (k,l) \in \Z(1,1) + \Z(0,2),\ m \in 2\Z \}$, however, $H'$ is not a group because $\xsG\ysG\in H'$, and $(\xsG\ysG)^2 = \xsG^2\ysG^2 \zsG^{-1} \not \in H'$.%
\end{rem}

\section*{Acknowledgements}
The author extends his sincere gratitude to the reviewers for their meticulous evaluation and numerous valuable suggestions. In particular, he is grateful to them for providing references \cite{BGS, BKO, GSS}, which are highly relevant to this study, and for their insightful suggestions for future research. 
In addition, he wishes to thank them 
for providing a direct proof of Lemma \ref{lemma:torsionfree:bilinear}
that does not rely on Chernikov's result.
He also would like to thank the editors for their helpful suggestions and advice.

\newcommand{\outbibitem}[7]
{
#1: #2, #3 {\bf #4}%
\ifthenelse{\equal{#5}{}}{}{(#5)}
(#6), #7. 
}
\newcommand{\outbibitembook}[5]
{
#1: #2, #3, #4, #5.
}

\textsc{Department of Health Informatics, Faculty of Health and Welfare Services Administration, Kawasaki University of Medical Welfare, Kurashiki, 701-0193, Japan}

{\it Email address}: \texttt{fumitake.hyodo@mw.kawasaki-m.ac.jp}


\begin{thebibliography}{9}\setlength{\itemsep}{1mm}
\bibitem{A1}
\outbibitem
{A. N. Andrianov}{Rationality theorems for Hecke series and zeta functions of the groups $GL_n$ and $SP_n$ over local fields}{Izv. Math.}{3}{3}{1969}{439--476}

\bibitem{BGS}
\outbibitem
{M. N. Berman, I. Glazer, and M. M. Schein}{Pro-isomorphic zeta functions of nilpotent groups and Lie rings under base extension}{Trans. Am. Math. Soc.}{375}{}{2022}{1051--1100}

\bibitem{BKO}
M. N. Berman, B. Klopsch, U. Onn: On pro-isomorphic zeta functions of $D^*$-groups of even Hirsch length,  arXiv:1511.06360v5.

\bibitem{D}
\outbibitem
{J. Dulinski}{Jacobi-Hecke algebras and a rationality theorem for a formal Hecke series}{Manuscripta Math.} {99}{2}{1999}{255--285}

\bibitem{GSS}
\outbibitem
{F.~J.~Grunewald, D.~Segal, and G.~C.~Smith}
{Subgroups of finite index in nilpotent groups}{Invent. Math.}{93}{}{1988}{185--223}

\bibitem{H}
\outbibitem
{E. Hecke}{{\" U}ber Modulfunktionen und die Dirichletschen Reihen mit Eulerscher Produktentwicklung, I, II}{Math. Ann.}{114}{1}{1937}{1--28, 316--351}

\bibitem{Hi}
\outbibitem
{T. Hina and T. Sugano}{On the local Hecke series of some classical groups over $p$-adic fields}{J. Math. Soc. Japan}{35}{1}{1983}{133--152}

\bibitem{Hy}
\outbibitem
{F. Hyodo}{A formal power series of a Hecke ring associated with the Heisenberg Lie algebra over ${\mathbb Z}_p$}{Int. J. Number Theory}{11}{8}{2015}{2305--2323}

\bibitem{Hy2}
\outbibitem
{F. Hyodo}{A note on a Hecke ring associated with the Heisenberg Lie algebra}{Math. J. Okayama Univ.}{64}{}{2022}{215--225}

\bibitem{Hy3}
F. Hyodo: A formal power series over a noncommutative Hecke ring and the rationality of the Hecke series for $GSp_4$, Preprint, 2022, arXiv:2208.09622v1.

\bibitem{IM}
\outbibitem
{N. Iwahori, H. Matsumoto}{On some Bruhat decomposition and the structure of the Hecke rings of $p$-adic Chevalley groups}{Inst. Hautes {\'E}tudes Sci. Publ. Math.}{25}{1}{1965}{ 5--48}

\bibitem{Sa}
\outbibitem
{I. Satake}{Theory of spherical functions on reductive algebraic groups over $p$-adic fields}{Inst. Hautes {\'E}tudes Sci. Publ. Math}{18}{}{1963}{5--69}

\bibitem{Sau}
\outbibitembook
{M. P. F. du Sautoy and L. Woodward}{Zeta functions of groups and rings}{Lecture Notes in Mathematics, vol. 1925, Springer-Verlag}{Berlin}{2008}

\bibitem{Seg}
\outbibitembook
{D. Segal}{Polycyclic groups}{Cambridge University Press}{Cambridge}{1983}

\bibitem{S1}
\outbibitem
{G. Shimura}{Sur les int{\'e}grales attach{\'e}es aux formes automorphes}{J. Math. Soc. Japan}{11}{4} {1959}{291--311}

\bibitem{S3}
\outbibitem
{G. Shimura}{On modular correspondences for $Sp (n, \Z)$ and their congruence relations}{Proc. Nat. Acad. Sci. U.S.A.}{49}{6}{1963}{824--828}

\bibitem{S2}
\outbibitembook
{G. Shimura}{Introduction to the arithmetic theory of automorphic functions}{Princeton University Press}{Princeton}{1971}

\bibitem{T}
\outbibitem
{T. Tamagawa}{On the $\zeta$-functions of a division algebra}{Ann. of Math.}{77}{2}{1963}{387--405}

\bibitem{War}
\outbibitembook
{R.B. Warfield}{Nilpotent Groups}{Lecture Notes in Mathematics, vol. 513, Springer-Verlag}{Berlin}{1976}

\bibitem{Wit}
\outbibitem
{E. Witt}{Treue Darstellung Liescher Ringe}{J. Reine Angew. Math.}{1937}{177}{1937}{152--160}
\end{thebibliography}
\end{document}